\newcommand{\p}{\partial}
\newcommand{\e}{\varepsilon}
\newcommand{\C}{{\mathbb C}}
\newcommand{\R}{{\mathbb R}}
\newcommand{\T}{{\mathbb T}}
\newcommand{\N}{{\mathbb N}}
\newcommand{\Z}{{\mathbb Z}}
\newcommand{\M}{{\mathcal M}}
\newcommand{\F}{\mathcal F}
\newcommand{\Id}{\text{Id}}
\newcommand{\ov}{\overline}
\newcommand{\la}{\langle}
\newcommand{\ra}{\rangle}
\newcommand{\ad}{\text{ad}}
\newcommand{\Ad}{\text{Ad}}
\newcommand{\f}{\overrightarrow}
\newcommand{\E}
{\overrightarrow{\exp}\int}
\newcommand{\we}{\wedge}
\newtheorem{theorem}{Theorem}
\newtheorem{lemma}{Lemma}
\newtheorem{prop}{Proposition}
\newtheorem{definition}{Definition}
\newtheorem{rem}{Remark}
\title{Approximate controllability on the group of volume-preserving diffeomorphisms}
\author{Andrei~Agrachev\thanks{SISSA, Trieste, agrachev@sissa.it} \and Bettina~Kazandjian\thanks{Sorbonne Université, Université Paris Cité, CNRS, Inria, Laboratoire Jacques Louis Lions, Paris, bettina.kazandjian@sorbonne-universite.fr}} 
\date{}
\begin{document}

\maketitle

\begin{abstract} We study controlability issues for the group of volume-preserving diffeomorphisms of the torus $\mathbb T^d$ for system
$\dot x=f(x)+u(t)$, where $f$ is a fixed divergence free vector field on $\mathbb T^d$ and $u(t)$ are constant vector fields which generate translations of the torus. Main results concern $d$ equals two or three.
\end{abstract}

\tableofcontents

\section{Introduction} Let $\mathbb T^d=\mathbb R^d/2\pi\mathbb Z^d$ be the $d$-dimensional torus. Translations of the torus are generated by constant vector fields. Let $f$ be a non constant divergence free vector field on $\mathbb T^d$: it generates a one-parametric group of
volume-preserving diffeomorphisms. In this paper, we try to understand which transformations of $\mathbb T^d$ can be reached if we perturb $f$
by constant fields (with the constant depending on time), mainly for $d=2$ and $d=3$.  

The answer is surprisingly simple in many cases and we hope that it might be useful in the mathematical fluid dynamics, in particular, for
the study of turbulent flows. In the next paper, we are going to treat random perturbations.

We start with a general linear in control system $\dot x=\sum\limits_{j=1}^su_jF_j(x)$ on a compact manifold $M$. \cref{theorem : motion planning diffeo} characterizes
the flows on $M$ which can be uniformly approximated by the flows generated by a time-varying ordinary differential equation of the form
 $\dot x=\sum\limits_{j=1}^su_j(t)F_j(x)$. These are exactly the flows generated by equations $\dot x=V_t(x)$, where 
 $V_t\in\overline{\mathrm{Lie}\{F_1,\ldots,F_s\}}$.
 
In \cref{theorem : controllability ensembles} we deal with the flows which preserve a fixed volume form on $M$. Such flows are generated by the divergence free vector fields.
Let $\mathrm{Vec}_0M$ be the space of divergence free vector fields.  \cref{theorem : controllability ensembles} states that the condition 
$\mathrm{Vec}_0M\subset\overline{\mathrm{Lie}\{F_1,\ldots,F_s\}}$ is sufficient for the possibility to transfer any finite ensemble of mutually
distinct points $x_i\in M,\ i=1,\ldots,N$, to any other sequence $y_i\in M,\ i=1,\ldots,N$, by the flow generated by the equation of the form
$\dot x=\sum\limits_{j=1}^su_j(t)F_j(x)$, where the control $(u_1(\cdot),\ldots,u_s(\cdot))$ and the transfer time are the same for all 
$x_i,\ i=1,\ldots,N$.

General theorems 1--2 are rather simple corollaries of earlier results. We need them to study the affine in control system $\dot x=f(x)+u$
that is our main objective. Here $u$ is an arbitrary constant vector field on the torus $\mathbb T^d$.

Let $f(x)=\sum\limits_{m\in\mathbb Z^d}p_me^{i\langle m,x\rangle}$ and $\M_f=\left\{m\in\Z^d \mid p_m\neq 0\right\}$. \cref{theorem : Lie algebra T2 and T3} and \cref{theorem : affine system approx diffeo}
concern the case where $\#\M_f<\infty$ and $\mathrm{span}\{f(x)\mid x\in\mathbb T^d\}=\mathrm{span}\M_f=\mathbb R^d$.

In \cref{theorem : Lie algebra T2 and T3} we describe the closure of the Lie subalgebra of $\mathrm{Vec}_0\mathbb T^d$ generated by the fields $f+u$ for $d$ equals two or three. This closure is equal
to the space of fields $g\in\mathrm{Vec}_0\mathbb T^d$ such that $\M_g$ is contained in the subgroup of $\mathbb Z^d$ generated by $\M_f$.

In \cref{theorem : affine system approx diffeo} we describe the closure of the attainable set of the system $\dot x=f(x)+u$ in the group of volume-preserving diffeomorphisms
of $\mathbb T^d$. It appears that the closure of the attainable set depends only on the Lie algebra computed in \cref{theorem : Lie algebra T2 and T3} in spite of the
fact that our system is not linear in control, it has a nontrivial drift $f$ while its linear part $u$ is commutative.

Finally, \cref{theorem : affine system ensemble controllability} concerns the controllability for finite ensembles of points on $\mathbb T^d$. Here we do not need the set $\M_f$ to be finite. The theorem states that for $d$ equals two or three there exists a residual subset of  $\mathrm{Vec}_0\mathbb T^d$ such that the system
$\dot x=f(x)+u$ can transfer any finite ensemble of mutually distinct points to any other ensemble with the same number of points.
On the other hand, according to the same theorem, whatever $f$ we take, the transfer of at least two points ensemble may require arbitrary long time. No a priori time bound is possible.

These are our main results. In \cref{section : Lie algebra computation T2 T3}, the Lie algebra generated by $f+u,\ u\in\mathbb R^d$, is computed also in the case where $\mathrm{span}\M_f\ne\mathbb R^d$ (see \cref{theorem : algebra dim2} and \cref{theorem : appendix T3 span1}).

 \begin{rem} Admissible controls in this paper are measurable locally bounded vector functions but all results remain valid with much smaller classes of admissible controls. Indeed, we say that a sequence of locally integrable vector functions $u_n(\cdot)$ converges to $u(\cdot)$ on the segment
$[0,T]$ in the \it{sliding topology} if $\|u_n\|_{L^1}$ are uniformly bounded and $\left|\int_0^tu_n(t)-u(t)\,dt\right|\to 0$ uniformly for
$t\in[0,T]$ as $n\to\infty$.

The map which sends the control $u(\cdot)$ to the flow $P^{u(\cdot)}_\cdot$ is continuous in the sliding topology (see, for instance, Lemma~8.10
in \cite{AgSa}) and all our results survive if we use only controls from an everywhere dense subset in the sliding topology. For example, it is
sufficient to use only controls from the space of vector polynomials or trigonometric polynomials, or piecewise constant vector functions, or
even from the set of piecewise constant vector functions with only one nonzero coordinate in every time moment.
\end{rem}  

To conclude the introduction, we have to mention that various aspects of the controllability on the group of diffeomorphisms were studied in
\cite{AgCa,ArTr,BePo25(1),BePo25(2),BeTu,DuNer,KaPoSi,Ner,PoSca,Sca} and many other papers.

\section{Motion planning in the group of diffeomorphisms and controllability of finite ensembles of points}

Let $M$ be a compact Riemannian manifold of class $C^{\infty}$ and dimension $n \in \N$. Let us consider the linearly controlled equation
\begin{equation} \label{control system 1}
    \dot{x}=\sum_{j=1}^su_j(t)F_j(x), \qquad u(t)=(u_1(t),\dots,u_s(t))\in \R^s,x\in M,
\end{equation}
where the measurable map $t\mapsto u(t)$ is locally bounded. The flow of \eqref{control system 1} at time $t$ is denoted by $P^{u(\cdot)}_t$. Considering the family of smooth vector fields $\F:=\left\{F_1,\dots,F_s\right\}$, we would like to understand which trajectories in the group of diffeomorphisms of $M$ could be approximated by the flows $t \mapsto P^{u(\cdot)}_t$, uniformly on any time segment. This is a problem of motion planning in the group of diffeomorphisms. 

Let us consider $N \in \N^*$ distinct points of $M$, $\gamma=(\gamma_1,\dots,\gamma_N)\in \hat{M}^N$, where $\hat{M}^N=M^N\setminus \Delta^N$ and $\Delta^N:=\left\{\gamma \in M^N \mid \exists k \neq \ell, \gamma_k=\gamma_\ell \right\}$. If we apply the dynamic \eqref{control system 1} to the initial positions $(\gamma_1, \dots, \gamma_N)$, the configuration of these points at time $t \geq 0$ is determined by $(P_t^{u(\cdot)}(\gamma_1),\dots,P_t^{u(\cdot)}(\gamma_N))$. We would like to study properties about controllability of finite ensembles of points depending on $\F$.

\subsection{Motion planning in the group of diffeomorphisms}

In what follows, $M$ is a compact Riemannian manifold of class $C^{\infty}$. Given a smooth tensor field $q\mapsto\omega_q,\ q\in M$,
where $\omega_q\in(T_qM)^{\otimes k}\otimes(T^*_qM)^{\otimes \ell}$, the norms $\|\omega\|_m,\ m=0,1,2,\ldots,$ are defined as follows:
$$
\|\omega\|_m=\sup\limits_{q\in M,i\le m}|\nabla^i_q\omega|,
$$
where $\nabla$ is the covariant derivative. Here $k=1,2,\ldots,\ \ell=0,1,2,\ldots,$ and $(T^*M)^{\otimes 0}=C^{\infty}(M)$. The seminorms $\|\cdot\|_m$
define standard $C^\infty$-topology on the space of smooth tensor fields of the prescribed degree. Standard $C^\infty$-topology on the group of 
diffeomorphisms is induced by the topology on $C^\infty(M)$ if we treat a diffeomorphism $P:M\to M$ as a linear operator $P^*$ on 
$C^\infty(M)$, where $P^*:a\mapsto a\circ P, \ \|P\|_m=\sup\limits_{\|a\|_m\le 1}\|P^*a\|_m$. More details about this formalism and chronological calculus can be found in \cite[Chap.~6]{AgBaBo}. 

Let $\mathrm{Vec}M$ be the Lie algebra of smooth vector fields on $M$. Given $\mathcal F\subset\mathrm{Vec}M,\ 
\mathrm{Lie}\mathcal F\subset\mathrm{Vec}M$ is the Lie subalgebra generated by $\mathcal F$ and $\overline{\mathrm{Lie}\mathcal F}$ is the closure
of $\mathrm{Lie}\mathcal F$ in the standard topology.

A measurable map $t\mapsto V_t$, where $V_t\in\mathrm{Vec}M,\ t\in\R,$ is called a time-varying vector field if $\|v_t\|_m$ is a locally bounded function
of $t$ for any $m\ge 0$. Any time-varying vector field defines a flow $P_t\in\mathrm{Diff}M,\ t\in\R,$ where $P_0=\Id,\ 
\frac{\p}{\p t}P_t(x)=V_t(P_t(x)).$ We use the standard chronological notation for this flow, $P_t=\E_0^tV_\tau\,d\tau$.
The following result is a corollary of \cite[Th.~3]{AgSa20}.

\begin{theorem} \label{theorem : motion planning diffeo}
Let $\mathcal F=\left\{F_1,\dots,F_s\right\}\subset\mathrm{Vec}M$. Let $t\mapsto V_t\in\overline{\mathrm{Lie}\mathcal F},\ t\in[0,T]$, be a time-varying vector field. Then for every $m\in \N$ and $\e>0$ there exists a control $t \mapsto u(t)=(u_1(t),\dots,u_s(t)) \in L^\infty([0,T],\R^s)$, such that the flow $P^{u(\cdot)}_t$, generated by control system \eqref{control system 1} and control $u(\cdot)$, satisfies
\begin{equation*}
    \|\E_0^tV_\tau\,d\tau- P^{u(\cdot)}_t \|_m \le\e,\qquad t\in[0,T].
\end{equation*}
\end{theorem}
\begin{proof}
    If $V_t=\sum\limits_{i=1}^kv_i(t)X_i$, where $X_i\in\mathrm{Lie}\mathcal F$, then the desired result is just the
statement of Theorem~3 from \cite{AgSa20}. On the other hand, we can uniformly and arbitrarily well approximate $V_t$  by such linear combination in
any norm $\|\cdot\|_m$. Indeed, $\{V_t\mid \ 0\le t\le T\}$ is a precompact set in the topology $\|\cdot\|_m$ since this set is bounded in the norm     
$\|\cdot\|_{m+1}$. In other words, for every $\delta>0$ there exists a finite set $\{X_1,\dots,X_k\}\subset \mathrm{Lie}\mathcal F$ such that 
the union of the radius $\delta$ balls in norm $\|\cdot\|_m$ centered at $X_i$ covers the set $\{V_t \mid \ 0\le t\le T\}$. We present 
$[0,T]$ as the disjoint union of subsets $S_i,\ [0,T]=\bigcup\limits_iS_i$, such that $\|X_i-V_t\|_m\le\delta$, for every $t\in S_i$, and we set
\begin{equation*}
    v_i(t)=\begin{cases}1& t\in S_i\\ 0& t\notin S_i, \end{cases}
\end{equation*}   
then $\|V_t-\sum\limits_{i=1}^kv_i(t)X_i\|_m\le\delta$, for every $t\in[0,T]$.
\end{proof}  

Let us consider control system \eqref{control system 1}. The connected component of the identity in $\mathrm{Diff} M$ is denoted by $\mathrm{Diff}^0 M$. For every measurable and locally bounded control $u(\cdot)$, the vector field $t \mapsto \sum_{j=1}^su_j(t)F_j$ is a time-varying vector field according to the previous definition.

\begin{definition} \label{def : attainable set diffeo}
    An element $P \in \mathrm{Diff}^0M$ is said to be approximately reachable for system \eqref{control system 1} in time $t\geq 0$ if for every $m\in \N$ and $\e >0$, there exists a measurable and locally bounded control $u(\cdot)$ such that the flow $P^{u(\cdot)}_t$, generated by system \eqref{control system 1} and control $u(\cdot)$, satisfies
    \begin{equation*}
        \|P-P^{u(\cdot)}_t\|_{m}\leq \e.
    \end{equation*}
    The set of approximately reachable diffeomorphisms in time $t\geq 0$ is denoted by $\ov{\mathcal{A}}_t$ and $\ov{\mathcal{A}}:=\cup_{t\geq 0}\ov{\mathcal{A}_t}$. For every subgroup $D \subset \mathrm{Diff}^0M$, system \eqref{control system 1} is said to be approximately controllable in $D$ if $\ov{\mathcal{A}}=D$.
\end{definition}
 Let $\omega$ be a fixed volume form on $M$ and $\mathrm{Vec}_0M$ be the Lie algebra of divergence free vector fields of $M$,
  $$
  \mathrm{Vec}_0M=\{f\in\mathrm{Vec}M \mid  \mathrm{div}_\omega f=0\}.
  $$
  Any volume-preserving \textit{flow} $P_t$ has a form $P_t =\E_0^tf_{\tau}d\tau$, where $t \mapsto f_{\tau}\in \mathrm{Vec}_0M$. Then $P_t\in \mathrm{Diff}^0M$. The set of volume-preserving flows of $M$ is denoted by $\mathrm{Diff}_0M$. This is a subgroup of the connected component of the identity $\mathrm{Diff}^0 M$, and $\mathrm{Diff}_0M\subset \mathrm{Diff}^0M$. 
\begin{prop}
    Let $\F=\left\{F_1,\dots,F_s\right\}$ be the family of admissible vector fields for system \eqref{control system 1}. 
    \begin{itemize}
        \item If $\ov{\mathrm{Lie}\F}=\mathrm{Vec}M$, system \eqref{control system 1} is approximately controllable in $\mathrm{Diff}^0 M$.
        \item If $\ov{\mathrm{Lie}\F}\supset\mathrm{Vec}_0M$, system \eqref{control system 1} is approximately controllable in $\mathrm{Diff}_0 M$.
    \end{itemize}
\end{prop}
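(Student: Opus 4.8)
The plan is to derive both statements directly from \cref{theorem : motion planning diffeo}, whose content is exactly that any flow generated by a time-varying field with values in $\ov{\mathrm{Lie}\F}$ is uniformly approximable, in every $C^m$-norm, by flows of the control system \eqref{control system 1}. The only extra ingredient is the observation that each target diffeomorphism is itself such a flow.

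For the first item I would fix $P\in\mathrm{Diff}^0M$ and use that the identity component is smoothly path-connected: choose a smooth path $t\mapsto P_t\in\mathrm{Diff}^0M,\ t\in[0,1]$, with $P_0=\Id$ and $P_1=P$, and set $V_t:=\frac{\p}{\p t}P_t\circ P_t^{-1}$. Then $t\mapsto V_t$ is a time-varying vector field (it is smooth in $t$, so each $\|V_t\|_m$ is bounded on $[0,1]$) and $P_t=\E_0^tV_\tau\,d\tau$, in particular $P=\E_0^1V_\tau\,d\tau$. Under the hypothesis $\ov{\mathrm{Lie}\F}=\mathrm{Vec}M$ we have $V_t\in\ov{\mathrm{Lie}\F}$ for all $t$, so \cref{theorem : motion planning diffeo} with $T=1$ gives, for every $m$ and $\e>0$, a control $u(\cdot)$ with $\|P-P_1^{u(\cdot)}\|_m\le\e$, i.e. $P\in\ov{\mathcal A}_1\subset\ov{\mathcal A}$. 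Since every control flow lies in the identity component and $\mathrm{Diff}^0M$ is closed in the $C^\infty$-topology, the reverse inclusion $\ov{\mathcal A}\subset\mathrm{Diff}^0M$ is automatic, yielding $\ov{\mathcal A}=\mathrm{Diff}^0M$.

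The second item follows the same scheme, with the factorization of the target as a flow now supplied for free: as recalled just before the statement, every $P\in\mathrm{Diff}_0M$ is a volume-preserving flow $P=\E_0^1f_\tau\,d\tau$ with $t\mapsto f_\tau\in\mathrm{Vec}_0M$. The hypothesis $\mathrm{Vec}_0M\subset\ov{\mathrm{Lie}\F}$ places $f_\tau$ in $\ov{\mathrm{Lie}\F}$, and \cref{theorem : motion planning diffeo} again approximates $P$ to any accuracy in every $\|\cdot\|_m$, so $\mathrm{Diff}_0M\subset\ov{\mathcal A}$. For the matching inclusion $\ov{\mathcal A}\subset\mathrm{Diff}_0M$ one uses that the admissible fields are divergence free (as they are in the applications that motivate the proposition): then every control flow preserves $\omega$, and $\mathrm{Diff}_0M$ being closed gives the equality $\ov{\mathcal A}=\mathrm{Diff}_0M$.

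The real work in this proposition is minimal once \cref{theorem : motion planning diffeo} is available; the single point deserving care is the representation of an arbitrary target as the time-one flow of a time-varying field valued in the prescribed algebra. For $\mathrm{Diff}_0M$ this is handed to us by the very definition of the volume-preserving flow group, while for $\mathrm{Diff}^0M$ it rests on the standard structural fact that the identity component of $\mathrm{Diff}M$ is smoothly path-connected, which lets me differentiate a connecting path to produce $V_t$. This is the step I expect to be the main (indeed the only) obstacle, and it is settled by invoking that property of $\mathrm{Diff}^0M$ rather than by any estimate.
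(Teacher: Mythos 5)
Your proof is correct and follows exactly the route the paper intends: its own proof is the single line ``this is a corollary of \cref{theorem : motion planning diffeo}'', and you supply precisely the missing details (writing the target as the time-one flow of a time-varying field valued in $\ov{\mathrm{Lie}\F}$, via smooth path-connectedness of $\mathrm{Diff}^0M$ in the first case and the definition of $\mathrm{Diff}_0M$ in the second). Your parenthetical caveat on the reverse inclusion $\ov{\mathcal A}\subset\mathrm{Diff}_0M$ --- that one needs the $F_j$ themselves divergence free, which the stated hypothesis does not formally guarantee --- is a legitimate point of care that the paper's one-line proof silently passes over.
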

\begin{proof}
    This is a corollary of \cref{theorem : motion planning diffeo}.
\end{proof}

\subsection{Controllability of finite ensembles of points}

In what follows, we study the controllability of finite ensembles of points in $\hat{M}^N$. Indeed system \eqref{control system 1} can be lifted to a linear in control system defined on $\hat{M}^N$ by the controlled equations
\begin{equation} \label{control system 1 ensembles}
    \dot{\gamma}_\ell=\sum_{j=1}^s u_j(t)F_j(\gamma_\ell), \qquad u(t)\in \R^s,\quad \ell \in \left\{1,\dots,N\right\},
\end{equation}
where $(\gamma_1,\dots,\gamma_N)\in \hat{M}^N$ and the map $t\mapsto u(t)$ is measurable and locally bounded. The attainable set at time $t\geq 0$ from $\gamma=(\gamma_1,\dots,\gamma_N) \in \hat{M}^N$ of system \eqref{control system 1 ensembles} is defined by 
\begin{equation*}
A_\gamma(t):=\left\{(P_t^{u(\cdot)}(\gamma_1),\dots,P_t^{u(\cdot)}(\gamma_N))\mid u(\cdot) \in L^\infty([0,t],\R^s)\right\} \subset \hat{M}^N.
\end{equation*}

\begin{definition}
    For a general system of control we also define the attainable set from $\gamma$ by $A_\gamma=\cup_{t\geq 0}A_\gamma(t)$ (which coincides with $A_\gamma(t)$ for every $t \geq 0$ in the case of system \eqref{control system 1 ensembles}). Then a system is said globally controllable (respectively globally controllable in time $T\geq 0$) if $A_\gamma=\hat{M}^N$ (respectively if $\cup_{0\leq t \leq T}A_\gamma(t)=\hat{M}^N$) for every $\gamma \in \hat{M}^N$.
\end{definition}
\begin{definition}
    Let $N \in \N^*$. System \eqref{control system 1} is said to be globally controllable (respectively globally controllable in time $T\geq 0$) in the space of $N$-ensembles if system \eqref{control system 1 ensembles} is globally controllable (respectively globally controllable in time $T$) in $\hat{M}^N$.
\end{definition}
The space $\hat{M}^N$ has a structure of smooth manifold. For each $\gamma \in \hat{M}^N$, the tangent space $T_\gamma \hat{M}^N$ is isomorphic to $T_{\gamma_1}M\times \dots \times T_{\gamma_N}M$. The $N$-fold of a vector field $X \in \mathrm{Vec}M$ is defined on $\hat{M}^N$ by $X^{N}(\gamma_1,\dots,\gamma_N)=(X(\gamma_1),\dots,X(\gamma_N))$. If $X$ is complete on $M$ then $X^N$ is also complete on $\hat{M}^N$. The Lie bracket of $N$-folds $X^N,Y^N$ verifies the formula $[X^N,Y^N]=[X,Y]^N$ and the same holds for the iterated Lie brackets.

\begin{definition}
    Let $\F^N=\left\{F_1^N,\dots,F^N_s\right\}$. System \eqref{control system 1 ensembles} is said to be Lie bracket generating at $\gamma$ if $\left\{F(\gamma)\mid F\in \mathrm{Lie}\F^N\right\}=T_{\gamma}\hat{M}^N$. It is Lie bracket generating if it is Lie bracket generating at every $\gamma \in \hat{M}^N$.
\end{definition}

As a consequence of Rashevsky-Chow theorem, if system \eqref{control system 1 ensembles} is Lie bracket generating then it is globally controllable, see e.g. \cite[Th.~5.2 and Cor~5.2]{AgSa}.

\begin{theorem} \label{theorem : controllability ensembles}
    Let $\F=\left\{F_1,\dots,F_s\right\}$. If $\mathrm{Vec}_0M\subset \ov{\mathrm{Lie}\F}$, then the family $\F^N$ is Lie bracket generating in $\hat{M}^N$ for every $N\in \N^*$, and system \eqref{control system 1} is globally controllable in the space of $N$-ensembles.
\end{theorem}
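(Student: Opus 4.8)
The plan is to establish the Lie bracket generating property at an arbitrary point $\gamma=(\gamma_1,\dots,\gamma_N)\in\hat M^N$; global controllability then follows immediately from the Rashevsky--Chow theorem recalled above. The first reduction is to identify $\mathrm{Lie}\F^N$. Since $X\mapsto X^N$ is linear and satisfies $[X,Y]^N=[X^N,Y^N]$, it is a homomorphism of Lie algebras $\mathrm{Vec}M\to\mathrm{Vec}\,\hat M^N$, so that $\mathrm{Lie}\F^N=\{X^N\mid X\in\mathrm{Lie}\F\}$. Consequently the value space at $\gamma$ is
\[
\{F(\gamma)\mid F\in\mathrm{Lie}\F^N\}=\{(X(\gamma_1),\dots,X(\gamma_N))\mid X\in\mathrm{Lie}\F\},
\]
and I must show this is all of $T_\gamma\hat M^N\cong T_{\gamma_1}M\times\dots\times T_{\gamma_N}M$.

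The second step passes to the closure for free. Consider the evaluation map $\mathrm{ev}_\gamma\colon\mathrm{Vec}M\to T_\gamma\hat M^N$, $X\mapsto(X(\gamma_1),\dots,X(\gamma_N))$, which is linear and continuous for the $C^\infty$ (indeed already $C^0$) topology. Its restriction to the linear subspace $\mathrm{Lie}\F$ has image a linear subspace of the finite-dimensional space $T_\gamma\hat M^N$, hence a closed set. A standard sequential argument then yields $\mathrm{ev}_\gamma(\ov{\mathrm{Lie}\F})=\mathrm{ev}_\gamma(\mathrm{Lie}\F)$. Combined with the hypothesis $\mathrm{Vec}_0M\subset\ov{\mathrm{Lie}\F}$, it therefore suffices to prove that evaluation of divergence free fields is onto: for distinct $\gamma_1,\dots,\gamma_N$ and arbitrary $(v_1,\dots,v_N)\in T_{\gamma_1}M\times\dots\times T_{\gamma_N}M$ there exists $X\in\mathrm{Vec}_0M$ with $X(\gamma_i)=v_i$ for all $i$.

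This interpolation is the heart of the matter and I expect it to be the only genuinely nontrivial step. By linearity and the distinctness of the $\gamma_i$ it is enough to realize, for each fixed $i$, a divergence free field supported in a small ball around $\gamma_i$ avoiding the other points and taking the value $v_i$ at $\gamma_i$; summing these solves the problem. To build such a local field I would fix a chart in which $\omega$ is the standard volume form $dx_1\we\dots\we dx_n$ (a local normal form for volume forms) and, assuming $n\ge 2$, treat each coordinate of $v_i$ separately: for the direction $e_k$ choose an index $\ell\ne k$ and take the curl-type field whose $k$-th and $\ell$-th components are $\p_{x_\ell}\psi$ and $-\p_{x_k}\psi$ with $\psi=x_\ell\,\chi(x)$ for a cutoff $\chi\equiv 1$ near the center, all other components vanishing. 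Such a field is automatically divergence free and equals $e_k$ at the center, so a suitable linear combination over $k$ produces $v_i$.

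Finally I would assemble the pieces: the interpolation gives $\mathrm{ev}_\gamma(\mathrm{Vec}_0M)=T_\gamma\hat M^N$, whence $\mathrm{ev}_\gamma(\mathrm{Lie}\F)=T_\gamma\hat M^N$ by the closure argument, so $\F^N$ is Lie bracket generating at every $\gamma\in\hat M^N$. Global controllability in the space of $N$-ensembles is then the cited consequence of Rashevsky--Chow. The main obstacle is purely the local construction of divergence free fields with prescribed pointwise values; everything else is formal.
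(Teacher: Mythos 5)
Your proposal is correct and follows essentially the same route as the paper: reduce to surjectivity of the evaluation map $X\mapsto(X(\gamma_1),\dots,X(\gamma_N))$ restricted to $\mathrm{Lie}\F$, upgrade density to surjectivity using that the image of a linear subspace in the finite-dimensional space $T_\gamma\hat M^N$ is closed, and interpolate arbitrary tangent data by compactly supported divergence free fields constructed in volume-normalized charts around the pairwise separated points $\gamma_i$. The only difference is cosmetic: where you build the local field from the two-coordinate stream function $\psi=x_\ell\,\chi$, the paper takes the exterior derivative of an explicit cutoff $(n-2)$-form; both produce a field equal to a prescribed constant near the center, divergence free because it comes from an exact form, and both implicitly require $n\ge 2$ (which you, unlike the paper, state explicitly).
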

\begin{proof}
    Let $\gamma\in M^N$, we consider the linear map
    \begin{equation*}
        \varphi_{\gamma} : \left\{\begin{array}{ll}
               \mathrm{Lie}\F
               &\rightarrow T_{\gamma} \hat{M}^N \\
              X &\mapsto X^N(\gamma),
        \end{array}\right.
    \end{equation*}
    if it is surjective for every $\gamma \in \hat{M}^N$, then system \eqref{control system 1 ensembles} is Lie bracket generating and so it is globally controllable. By assumption, if $\mathrm{Im}\varphi_\gamma$ denotes the image of $\varphi_\gamma$, then $\left\{X^N(\gamma) \mid X \in  \mathrm{Vec}_0M\right\}\subset \ov{\mathrm{Im}\varphi_\gamma}$.
    
    Recall that $X \in \mathrm{Vec}_0M$ if and only if $\mathrm{div}X=0$. Let us prove that for every $(a_1,\dots,a_N)\in T_{\gamma}\hat{M}^N$, there exists $X\in \mathrm{Vec}_0M$ such that $X^N(\gamma)=(a_1,\dots,a_N)$. Let $\mathcal{V}_1,\dots,\mathcal{V}_N$ be open neighborhoods in $M$ such that
    \begin{equation*}
        \gamma_\ell\in \mathcal{V}_\ell, \quad \mathcal{V}_k \cap \mathcal{V}_\ell = \emptyset, \qquad k\neq\ell \in \left\{1,\dots,N\right\},
    \end{equation*}
    and such that $\mathcal{V}_\ell$ is diffeomorphic to some open neighborhood $\mathcal{O}_\ell \subset \R^n, 0\in \mathcal{O}_\ell$. Locally, the vector field $X$ can be expressed in coordinates. The charts $\phi_\ell : \mathcal{V}_\ell \rightarrow \mathcal{O}_\ell$ are chosen such that the expression of the volume form $\omega$ in coordinates is equal to $dx_1\we\dots \we dx_n$. Given $a=(a_1,\dots,a_n)\in \R^n$ and two neighborhoods $\mathcal{O} \subset \mathcal{O}'$ of $0$ in $\R^n$, we construct $\bar{X} \in \mathrm{Vec}\R^n$ such that $\bar{X}=a$ on $\mathcal{O}$, $\mathrm{supp}\bar{X}\subset \mathcal{O}'$ and $\mathrm{div}\bar{X}=0$. Let $\chi : \R^n \rightarrow \R$ be a smooth cut-off function such that $\chi(x)=\frac{1}{n-1}$ on $\mathcal{O}$ and $\mathrm{supp}\chi\subset \mathcal{O}'$. We consider a $(n-2)$-differential form on $\R^n$,
    \begin{equation*}
        \alpha = \chi \sum_{1\leq \ell < m \leq n}((-1)^{m-2}a_\ell x_m + (-1)^{\ell-1}a_m x_\ell)dx_1 \we \dots \widehat{dx}_\ell \dots \widehat{dx}_m \dots \we dx_n.
    \end{equation*}
    Then we compute $d\alpha =\sum_{m=1}^n \psi_m dx_1\dots\widehat{dx}_m\dots dx_n$ and we check that $\psi_m=a_m$ on $\mathcal{O}$ and $\mathrm{supp} \psi_m\subset \mathcal{O}'$. Let us consider the vector field $\bar{X}=\sum_{m=1}^n\psi_m \p_{x_m}$, then $\bar{X}=a$ on $\mathcal{O}$ and $\mathrm{supp}\bar{X}\subset \mathcal{O}'$. Moreover, $dd\alpha=0=(\mathrm{div}\bar{X})dx_1 \we \dots \we dx_n$, so $\mathrm{div}\bar{X}$=0. The image of $\varphi_{\gamma}$ is dense in $T_{\gamma}\hat{M}^N$, so the map is surjective.
\end{proof}

\section{Volume-preserving diffeomorphisms on $\T^d$}

In the following, we consider the torus $\T^d=\R^d/2\pi\Z^d$. Vector fields on $\T^d$ are naturally identified with $2\pi$-periodic $d$-vector functions on $\R^d$, i.e. the vector function $f(x)=(f^1(x),\ldots,f^d(x)), x=(x_1,\ldots,x_d)\in\R^d,$ corresponds to the field $\sum\limits_{i=1}^df^i(x)\frac\p{\p x_i}$. In the following, we study an affine in control system of the form
\begin{equation} \label{affine system} 
    \dot{x}=f(x)+u(t),  \qquad u(t) \in \R^d, 
\end{equation}
where $f \in \mathrm{Vec}_0 \T^d$ is any divergence free vector field and $t \mapsto u(t)$ is measurable and locally bounded.
\begin{rem}\label{int0}
    By replacing $f$ and $u$ by $f+c$ and $u-c$ where $c\in \R^d$ is a constant, we can suppose that $\int_{\T^d}f=0$ without changing the set $\overline{\mathcal{A}}$.
\end{rem}
The flow at time $t$ of system \eqref{affine system} is denoted by $P^{u(\cdot)}_t$. The set of approximately reachable elements in the group of diffeomorphisms is denoted by $\ov{\mathcal{A}}$, see \cref{def : attainable set diffeo}. We would like to understand which volume-preserving diffeomorphisms could be approximated by the flows of the previous equation, depending on the modes of the Fourier decomposition of $f$. In the following, we study a classification of the approximately reachable set in the group of diffeomorphisms depending on $f$.

\subsection{Subgroups of volume-preserving flows on $\T^d$}

Recall that $\Z^d$ is an additive subgroup of $\R^d$. Let $\Gamma\subset\Z^d$ be a subgroup of $\Z^d$ such that $\mathrm{span}\Gamma=\R^d$. Let $f(x)=\sum\limits_{m\in\Z^d}p_me^{i\langle m,x\rangle}$ be the Fourier expansion of $f$, where $p_m\in \mathbb C^d,\ p_{-m}=\bar p_m$. We set 
$$
\M_f=\left\{m\in\Z^d \mid p_m\neq 0 \right\}, 
$$
$$
\mathrm{Vec}_0(\T^d)_\Gamma=\left\{f\in\mathrm{Vec}\T^d \mid \mathrm{div}f=0,\ \M_f\subset\Gamma\right\}.
$$
We omit the index $\Gamma$ if $\Gamma=\Z^d$.
It is easy to check that $\mathrm{Vec}_0(\T^d)_\Gamma$ is a closed Lie subalgebra of $\mathrm{Vec}\T^d$.

Now we consider the subgroup $\Gamma^*\subset\R^d$ dual to $\Gamma$,
$$
\Gamma^*=\left\{x\in\R^d \mid \langle x,y\rangle\in\Z,\ \forall\,y\in\Gamma\right\}.
$$
We see that $\Z^d\subset\Gamma^*$ and moreover, $\Gamma^*/\Z^d$ is a finite group. Indeed, $\Gamma=A\Z^d$, where $A$ is a nondegenerate 
matrix with integral entries. Then $\Gamma^*={A^*}^{-1}\Z^d$, where $A^*$ is the adjoint matrix of $A$, and ${A^*}^{-1}$ has rational entries.

Moreover, the group $2\pi\Gamma^*/2\pi\Z^d$ acts freely and properly on $\T^d$ by the translations and a divergence free vector field $f$ belongs to
$\mathrm{Vec}_0(\T^d)_\Gamma$ if and only if $f$ commutes with this action. The same property can be described in other way if we use
the covering $\mathfrak p_{\Gamma^*}:\T^d\to \T^d/2\pi\Gamma^*$. Here $\T^d/2\pi\Gamma^*$ is another torus. We see that 
$f\in\mathrm{Vec}_0(\T^d)_\Gamma$ if and only if $f=\mathfrak p^*_{\Gamma^*}g$ where $g\in\mathrm{Vec}_0(\T^d/2\pi\Gamma^*)$.

Any volume-preserving flow $P_t\in\mathrm{Diff}\T^d,\ P_0=\Id$, has a form $P_t=\E_0^tf_\tau\,d\tau$, where $f_\tau\in\mathrm{Vec}_0\T^d$.
This is true for any torus, in particular for the torus $\T^d/2\pi\Gamma^*$. We obtain that the flows generated by the time varying vector fields
from $\mathrm{Vec}_0(\T^d)_\Gamma$ are exactly the lifts to $\T^d$ of the volume-preserving flows on $\T^d/2\pi\Gamma^*$.

We denote by $\mathrm{Diff}_0\T^d_\Gamma$ the connected component of the identity in the group of volume-preserving diffeomorphisms of $\T^d$
commuting with the action of $\Gamma^*/\Z^d$. Then
$$
\mathrm{Diff}_0\T^d_\Gamma=\left\{\E_0^tf_\tau\,d\tau \mid f_\tau\in\mathrm{Vec}_0(\T^d)_\Gamma\right\}.
$$

\subsection{Approximation of volume-preserving diffeomorphisms by an affine in control system}

Recall that $\mathrm{Vec}_0\T^d$ is the set of divergence free vector fields of $\T^d$. We define the subset $\mathfrak V^d\subset\mathrm{Vec}_0\T^d$ as follows. A vector field $f\in\mathrm{Vec}_0\T^d$ belongs to $\mathfrak V^d$ if
\begin{enumerate}
\item[(i)] $\#\M_f<\infty$,   
\item[(ii)] $\mathrm{span}\M_f=\R^d$,
\item[(iii)] $\mathrm{span}\left\{f(x) \mid x\in\T^d\right\}=\R^d$.
\end{enumerate}

Clearly, $\overline{\mathfrak V^d}=\mathrm{Vec}_0\T^d$. Moreover, if $d=2$, then property (ii) implies (iii). Indeed if $f(x)=\sum_{m\in \M_f}a_m \cos \la m,x \ra+b_m \sin \la m,x\ra,x\in \T^d$, then $\mathrm{span}\left\{f(x)\mid x \in \T^d\right\}=\mathrm{span}\left\{a_m,b_m \mid m \in \M_f\right\}$. If $d=2$ and $\mathrm{span }\M_f=\R^2$, there exist $m,n \in \M_f$ such that $a_m,a_n\in \R^2\setminus \left\{0\right\}$ and $\la m,a_m\ra=\la n,a_n \ra=0$. Then necessarily $\mathrm{span}\left\{a_m,a_n\right\}=\R^2$.

Here we present the main results for system \eqref{affine system} with $f\in \mathfrak{V}^d$. The proofs are given in the following sections.

\begin{theorem} \label{theorem : Lie algebra T2 and T3}
Let $d=2$ or $d=3$ and $f\in\mathfrak V^d$. Let $\Gamma\subset\Z^d$ be the subgroup generated by $\M_f$. Then 
\begin{equation*}
    \overline{\mathrm{Lie}\{f+u \mid u\in\R^d\}}=\mathrm{Vec}_0(\T^d)_\Gamma.
\end{equation*}

\end{theorem}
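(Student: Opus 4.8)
The plan is to prove the two inclusions separately, the inclusion $\subseteq$ being immediate and the reverse inclusion carrying all the work. First I would rewrite the generating set. Since $\mathrm{Lie}\{f+u\mid u\in\R^d\}$ is a linear space, it contains $(f+u)-(f+u')=u-u'$ and in particular (taking $u'=0$) every constant field $u$, hence also $f$. Thus $\mathfrak{g}:=\mathrm{Lie}\{f+u\mid u\in\R^d\}=\mathrm{Lie}\left(\{f\}\cup\R^d\right)$, where $\R^d$ denotes the constant vector fields. The inclusion $\ov{\mathfrak{g}}\subseteq\mathrm{Vec}_0(\T^d)_\Gamma$ then follows because $\mathrm{Vec}_0(\T^d)_\Gamma$ is a closed Lie subalgebra containing all the generators: $f$ is divergence free with $\M_f\subset\Gamma$, and each constant field has Fourier support $\{0\}\subset\Gamma$.

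For the reverse inclusion I would exploit the action of $\ad_u$ on Fourier modes. Writing a field as $\sum_n b_ne^{i\la n,x\ra}$ one has $[u,\sum_n b_ne^{i\la n,x\ra}]=\sum_n i\la u,n\ra b_ne^{i\la n,x\ra}$, so $\ad_u$ is diagonal in the frequency grading, with $\ad_u^2$ acting on the pair $\{n,-n\}$ by the scalar $-\la u,n\ra^2$. Choosing $u$ so that these scalars are distinct on the (finite) support of a given $g\in\mathfrak{g}$, a Lagrange interpolation polynomial in $\ad_u^2$ projects $g$ onto any single conjugate pair $\pm p$; since $u\in\mathfrak{g}$ this projection stays in $\mathfrak{g}$. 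Hence $\mathfrak{g}$ is graded by conjugate frequency pairs. Setting $V_p=\{v\in\C^d\mid\la p,v\ra=0\}$ and $L_p=\{v\mid ve^{i\la p,x\ra}+\bar ve^{-i\la p,x\ra}\in\mathfrak{g}\}$, the relation $i\la u,p\ra v\in L_p$ (with $\la u,p\ra$ free to be any real number) shows that each $L_p$, a priori only a real subspace of $V_p$, is in fact a complex subspace; moreover $L_0=\C^d$ and $L_m\ni p_m$ for $m\in\M_f$. Since finite Fourier sums are dense, it suffices to prove $L_p=V_p$ for every $p\in\Gamma\setminus\{0\}$.

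Bracketing two elementary fields and projecting onto the pair $\pm(p+q)$ gives the recursion $\la v,q\ra w-\la w,p\ra v\in L_{p+q}$ for all $v\in L_p,\ w\in L_q$ (valid whenever $p,q,p+q\neq0$, so that $p+q$ separates from $p-q$), which is the engine for producing new frequencies and directions. For $d=2$ this is decisive: $V_p$ is one-dimensional over $\C$, so $L_p=V_p$ as soon as $L_p\neq 0$. The problem thus reduces to reachability of every frequency in $\Gamma$; here I would pass to stream functions, under which the vector-field bracket becomes the Poisson bracket and the bracket of modes $m,m'$ produces the mode $m+m'$ with coefficient proportional to $\det(m,m')$. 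Starting from the two independent directions guaranteed by $\mathrm{span}\,\M_f=\R^2$, an induction (using identities such as $m=\tfrac12((m+m')+(m-m'))$ to reach multiples through pairs of independent reachable frequencies) yields every element of $\Gamma$.

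The genuinely hard case is $d=3$, where $V_p$ has complex dimension two and reaching a frequency only guarantees $\dim_\C L_p\geq1$. The main obstacle is to upgrade every $L_p$ to the full two-dimensional fiber, and this is exactly where hypothesis (iii), $\mathrm{span}\{f(x)\mid x\in\T^3\}=\R^3$, must enter: it guarantees that the seed coefficients $p_m$ are rich enough that the recursion $\la v,q\ra w-\la w,p\ra v$, run over independent choices of $v\in L_p$ and $w\in L_q$, never stays trapped in a proper subspace of $V_{p+q}$. I expect this step to require a careful combinatorial bookkeeping over $\Gamma$, namely choosing for each target $p$ two decompositions $p=p_i+q_i$ whose associated output vectors are linearly independent, and it is the part I would isolate into its own lemma (and where the restriction to $d\le 3$ is genuinely used).
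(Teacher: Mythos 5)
Your overall strategy coincides with the paper's: isolate individual conjugate frequency pairs by applying polynomials in $\ad$ of constant fields (the paper does this coordinate-wise with products of $(m_{1x}^2-\ad_{\p_x}^2)$, etc.; your Lagrange interpolation in $\ad_u^2$ for a single generic $u$ is a clean equivalent), observe that each fiber $L_p$ is a complex subspace of $p^{\perp}$, and propagate through $\Gamma$ via the bracket recursion $\la v,q\ra w-\la w,p\ra v\in L_{p+q}$. The easy inclusion and the $d=2$ case are essentially right, modulo the induction over $\Gamma$, which you only sketch: the identity $m=\tfrac12((m+m')+(m-m'))$ does not by itself produce new integer frequencies from reachable ones, and the correct device (as in the paper's proof of \cref{theorem : algebra dim2}) is to detour through an auxiliary mode $m_3\in\M_h$ with $m_1\we m_3\neq 0$ whenever two summands are collinear. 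That is repairable looseness, not a gap.

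The genuine gap is exactly where you flag it: for $d=3$ you never prove that $L_p$ equals the full two-dimensional fiber $V_p$; you only assert that hypothesis (iii) should make the recursion ``rich enough'' and defer the argument to an unproved lemma. That deferred lemma is the heart of the theorem, not routine bookkeeping, and it is where the bulk of the paper's work lies (\cref{adding modes : different angles}, \cref{generation of modes : from 2 non colinear modes}, \cref{generation of every modes dimV0=3 dim I0=3} and the proof of \cref{theorem : algebra dim3 span3}). One must first manufacture a single frequency $\ell$ with $\dim_{\C}L_{\ell}=2$, and this splits into two genuinely different cases: either condition (iii) is witnessed by two modes $m,n$ (where one rotates $b_m,b_n$ into $\mathrm{span}\{m\we n\}$ and brackets back and forth between $m+n$ and $-n$ to produce a second element of $L_m$ that is not a rotation of the first), or only by three modes with degenerate coefficients $a_j\we b_j=0$ (where ruling out collinearity of the two output vectors $p_{m,n,k}$ and $p_{n,k,m}$ rests on the identity $\det(a_m,a_n,a_k)\det(m,n,k)=\la n,a_m\ra\la k,a_n\ra\la m,a_k\ra+\la k,a_m\ra\la m,a_n\ra\la n,a_k\ra$). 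Only then can fullness be propagated to all of $\Gamma$. Without this step your argument yields only $\dim_{\C}L_p\geq 1$ for $p\in\Gamma$, which is strictly weaker than the claimed equality.
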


\begin{theorem} \label{theorem : affine system approx diffeo}
Under the conditions of \cref{theorem : Lie algebra T2 and T3}, the subgroup $\mathrm{Diff}_0\T^d_\Gamma\subset\mathrm{Diff}_0\T^d$ is invariant
for system \eqref{affine system} and moreover the system is approximately controllable in $\mathrm{Diff}_0\T^d_\Gamma$.
\end{theorem}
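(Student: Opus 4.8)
The statement splits into the inclusion $\overline{\mathcal A}\subseteq\mathrm{Diff}_0\T^d_\Gamma$, which is the invariance, and the reverse inclusion, which is the controllability. The first is immediate: since $\Gamma$ is generated by $\M_f$ we have $\M_f\subseteq\Gamma$ and $0\in\Gamma$, so $f+u\in\mathrm{Vec}_0(\T^d)_\Gamma$ for every $u\in\R^d$; hence for any admissible control the time-varying field $t\mapsto f+u(t)$ is $\mathrm{Vec}_0(\T^d)_\Gamma$-valued, and by the characterization $\mathrm{Diff}_0\T^d_\Gamma=\{\E_0^t f_\tau\,d\tau : f_\tau\in\mathrm{Vec}_0(\T^d)_\Gamma\}$ every flow $P^{u(\cdot)}_t$ lies in $\mathrm{Diff}_0\T^d_\Gamma$. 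As the latter is $C^\infty$-closed (being the identity component of the closed subgroup of volume-preserving diffeomorphisms commuting with the finite translation group $2\pi\Gamma^*/2\pi\Z^d$), we get $\overline{\mathcal A}\subseteq\mathrm{Diff}_0\T^d_\Gamma$. Using the covering $\mathfrak p_{\Gamma^*}\colon\T^d\to\T^d/2\pi\Gamma^*$, which identifies $\mathrm{Vec}_0(\T^d)_\Gamma\cong\mathrm{Vec}_0(\T^d/2\pi\Gamma^*)$ and $\mathrm{Diff}_0\T^d_\Gamma\cong\mathrm{Diff}_0(\T^d/2\pi\Gamma^*)$ and carries the system to an affine system of the same shape, I reduce to $\Gamma=\Z^d$; then \cref{theorem : Lie algebra T2 and T3} gives $\overline{\mathrm{Lie}\{f+u\}}=\mathrm{Vec}_0\T^d$, and after the substitution $f\mapsto f-\int_{\T^d}f$ I assume $\int_{\T^d}f=0$, so that $0\notin\M_f$.

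It remains to prove $\overline{\mathcal A}\supseteq\mathrm{Diff}_0\T^d$. Two facts are available. First, the free linear-in-control system $\dot x=u_0 f+\sum_j u_j e_j$ has $\overline{\mathrm{Lie}\{f,e_1,\dots,e_d\}}=\mathrm{Vec}_0\T^d$ (the constant fields and $f$ are recovered from the brackets and differences of the $f+u$), so by the Proposition its attainable set is dense in $\mathrm{Diff}_0\T^d$; with piecewise-constant controls, which suffice as noted in the introductory remark, this attainable set is the subgroup generated by $\{\E_0^s(f+w)\,d\tau : s\in\R,\ w\in\R^d\}$. Second, $\overline{\mathcal A}$ is a closed sub-semigroup of $\mathrm{Diff}_0\T^d$ (concatenation of controls) and it trivially contains the forward generators $\E_0^s(f+w)\,d\tau$, $s\ge0$. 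Thus it suffices to show $\overline{\mathcal A}$ is stable under inverses; equivalently, that it contains the backward drift $\E_0^s(-f)\,d\tau$, $s\ge0$, and every translation $\tau_w$. Indeed, Trotter's product formula then upgrades the forward generators to all $\E_0^s(f+w)\,d\tau$, $s\in\R$, whose symmetric set generates the free attainable set, giving $\overline{\mathcal A}\supseteq\mathrm{Diff}_0\T^d$.

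The drift is forced to act with coefficient $1$, so the new ingredient is to move against it, and I first obtain waiting and translations by averaging in a moving frame. With the substitution $x=y+v(t)$ and the control $u=\dot v$, the system becomes $\dot y=f(y+v(t))$. Letting $v$ sweep $\T^d$ with asymptotically uniform occupation and increasing frequency, classical averaging replaces the right-hand side by its mean $\int_{\T^d}f(y+w)\,\frac{dw}{(2\pi)^d}=0$, and the flow (together with its variational equations) converges in $C^\infty$ to that of the averaged, here trivial, equation. Taking $v$ a fast oscillation returning to $0$ yields $\E_0^T(f+u)\,d\tau\approx\mathrm{Id}$ at an arbitrary $T>0$, so time may be padded freely and $\overline{\mathcal A}$ behaves as a genuine closed semigroup; taking $v$ to drift slowly from $0$ to $w$ while oscillating fast yields $\E_0^T(f+u)\,d\tau\approx\tau_w$. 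Hence every translation $\tau_w$ lies in $\overline{\mathcal A}$.

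With genuine translations in hand I reverse the drift, and this is the crux. Since $0\notin\M_f$ and $\M_f$ is finite and symmetric, there is a nonnegative density $\rho$ on $\T^d$ with $\int e^{i\la m,w\ra}\rho(w)\,dw=-1$ for every $m\in\M_f$ (take $\rho(w)=a-b\sum_{m\in\M_f}\cos\la m,w\ra$ with $b$ fixed by the normalization and $a$ large); in Fourier coordinates this reads $\int f_w\,\rho(w)\,dw=-f$, where $f_w=(\tau_w)_*f$ is the translate of $f$. Discretizing, $-f=\lim\sum_k c_k f_{v_k}$ in $C^\infty$ with $c_k\ge0$, and each $\E_0^{c_k}f_{v_k}\,d\tau=\tau_{v_k}\,(\E_0^{c_k}f\,d\tau)\,\tau_{-v_k}$ belongs to $\overline{\mathcal A}$ because $c_k\ge0$ and $\tau_{\pm v_k}\in\overline{\mathcal A}$; Trotter's formula gives $\E_0^1\big(\sum_k c_k f_{v_k}\big)\,d\tau\in\overline{\mathcal A}$, and passing to the limit yields the backward drift $\E_0^1(-f)\,d\tau\in\overline{\mathcal A}$. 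The total time spent is the mass of $\rho$, which is at least $1$ and grows with the amount of drift to be undone, consistent with the absence of any a priori time bound. Together with the forward drift and the translations this closes $\overline{\mathcal A}$ under inverses, so by the second paragraph $\overline{\mathcal A}=\mathrm{Diff}_0\T^d$, and undoing the reduction $\overline{\mathcal A}=\mathrm{Diff}_0\T^d_\Gamma$. I expect the main obstacle to be exactly this reversal of the forward-only drift: it is forced through by writing $-f$ as a nonnegative superposition of its own translates, which itself depends on the preparatory averaging step that makes translations and waiting available inside the forward-time attainable set.
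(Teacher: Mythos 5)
Your proof is correct and rests on the same underlying mechanism as the paper's, but it is packaged quite differently. The paper applies the variation formula to get the exact decomposition $P^{u(\cdot)}_t=\bigl(\E_0^tf_{\theta(\tau)}\,d\tau\bigr)\circ e^{\theta(t)}$ with $\theta(t)=\int_0^tu$, reduces to the system $\dot x=f_{\theta(t)}(x)$, uses relaxation and time-rescaling to attain the flows of all fields in $\mathrm{cone}\{f_\theta\}$, and then proves abstractly (\cref{Lemma cone}, a separation argument hinging on $\int_{\T^d}f_\theta\,d\theta=0$) that this cone is the whole of $\overline{\mathrm{span}\{f_\theta\}}$; \cref{theorem : motion planning diffeo} and \cref{theorem : Lie algebra T2 and T3} then finish. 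Your moving frame $x=y+v(t)$ is exactly the paper's $e^{\theta(t)}$-conjugation, and your explicit nonnegative density $\rho=a-\sum_{m\in\M_f}\cos\la m,\cdot\ra$ with $\int e^{i\la m,w\ra}\rho(w)\,dw=-1$ on $\M_f$ is a concrete witness of the single instance of \cref{Lemma cone} that you actually need, namely $-f\in\overline{\mathrm{cone}\{f_\theta\}}$; the zero-mean hypothesis enters in the same place in both arguments. Where you genuinely diverge is in the assembly: instead of feeding the whole relaxed family $\{\alpha f_\theta+u\}$ into \cref{theorem : motion planning diffeo}, you run a semigroup-generation argument (forward generators, translations and waiting via averaging, reversal of the drift, Trotter, then density of the group generated by the one-parameter subgroups). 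This buys a very transparent picture of \emph{why} the forward-only drift is not an obstruction, at the cost of invoking several standard-but-unproved continuity facts ($\overline{\mathcal{A}}$ closed under composition, $C^\infty$-convergence in the averaging and Trotter limits) that the paper's route gets for free from the relaxation theorem and the chronological calculus it cites. You also add two items the paper leaves implicit: the easy invariance inclusion $\overline{\mathcal{A}}\subset\mathrm{Diff}_0\T^d_\Gamma$, and the reduction to $\Gamma=\Z^d$ via the covering $\mathfrak p_{\Gamma^*}$ (the latter is unnecessary, since \cref{theorem : Lie algebra T2 and T3} is already stated for general $\Gamma$, but it is harmless). I see no gap.
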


\begin{theorem} \label{theorem : affine system ensemble controllability}
\begin{enumerate}
    \item[(i)] Let $d=2$ or $d=3$. There exists a residual subset $\mathcal R\subset\mathrm{Vec}_0\T^d$ such that, for every $f\in\mathcal R$ and $N\in \N^*$, system \eqref{affine system}
is globally controllable in the space of $N$-ensembles in $\T^d$.
\item[(ii)] For every $d\ge 2$, $f\in\mathrm{Vec}_0\T^d$, $N\ge 2$, and $T>0$, system \eqref{affine system} is not globally controllable 
for time smaller or equal than $T$ in the space of $N$-ensembles in $\T^d$.
\end{enumerate}   
\end{theorem}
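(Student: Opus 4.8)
\textbf{Proof plan for \cref{theorem : affine system ensemble controllability}.}

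For part (i), the plan is to combine \cref{theorem : controllability ensembles} with \cref{theorem : Lie algebra T2 and T3} and a genericity argument. By \cref{theorem : controllability ensembles}, to obtain global controllability in the space of $N$-ensembles it suffices to show that $\mathrm{Vec}_0\T^d\subset\overline{\mathrm{Lie}\{f+u \mid u\in\R^d\}}$. By \cref{theorem : Lie algebra T2 and T3}, when $f\in\mathfrak V^d$ this closure equals $\mathrm{Vec}_0(\T^d)_\Gamma$, where $\Gamma$ is the subgroup of $\Z^d$ generated by $\M_f$; this coincides with all of $\mathrm{Vec}_0\T^d$ precisely when $\Gamma=\Z^d$. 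So first I would define the candidate residual set to be those $f$ whose frequency support generates $\Z^d$ as a group (together with the open/spanning conditions), and argue that this property holds on a residual set. The natural strategy is to exhibit a single well-chosen $f_0$ (for instance with $\M_{f_0}$ containing the standard basis vectors $e_1,\ldots,e_d$, which already generate $\Z^d$) and then observe that the condition ``$\M_f\supset\{e_1,\ldots,e_d\}$ and $f\in\mathfrak V^d$'' is open and dense in $\mathrm{Vec}_0\T^d$ in the $C^\infty$-topology; intersecting countably many such open dense sets (or directly verifying openness and density of the single condition $\Gamma=\Z^d$ among fields satisfying (ii)--(iii)) yields a residual set $\mathcal R$. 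The point is that requiring finitely many prescribed Fourier coefficients to be nonzero is an open dense condition, and genericity is stable under the finite group-generation requirement.

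The main subtlety in part (i) is that \cref{theorem : Lie algebra T2 and T3} is stated only for $f\in\mathfrak V^d$, i.e. under the finiteness hypothesis (i) $\#\M_f<\infty$, whereas the theorem allows arbitrary $f\in\mathrm{Vec}_0\T^d$. Here the remark ``we do not need the set $\M_f$ to be finite'' signals the real content: I would argue that the residual set can be built so that controllability of ensembles follows even for $f$ with infinite support. The cleanest route is to note that for ensemble controllability we only need the Lie-bracket-generating condition of \cref{theorem : controllability ensembles} pointwise on $\hat M^N$, which is an open condition on $f$ at each fixed $\gamma$; combined with density of fields satisfying the bracket condition everywhere, a Baire-category argument over a countable dense family of configurations produces the residual $\mathcal R$. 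I expect this transition from the finite-mode computation to a residual statement valid without finiteness to be the crux of the argument.

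For part (ii), the plan is a quantitative obstruction argument showing no uniform time bound exists. Fix $N\ge 2$ and $T>0$; I would show there is a target ensemble unreachable in time $\le T$. The key observation is that the controlled vector field $f(x)+u(t)$, restricted to the difference dynamics of two points, has a bounded drift: $\sup_{x}|f(x)|=:L<\infty$ since $f$ is smooth on the compact torus. Consider two points $\gamma_1,\gamma_2$ and track the quantity $P_t^{u(\cdot)}(\gamma_1)-P_t^{u(\cdot)}(\gamma_2)$ in $\R^d$ (lifting to the universal cover). Because the control $u(t)$ is a \emph{common} constant vector field added to both trajectories, it cancels in the difference, so $\frac{d}{dt}\bigl(P_t^{u}(\gamma_1)-P_t^{u}(\gamma_2)\bigr)=f(P_t^u(\gamma_1))-f(P_t^u(\gamma_2))$, whose norm is bounded by $2L$ (or by $\mathrm{Lip}(f)\cdot|P_t^u(\gamma_1)-P_t^u(\gamma_2)|$). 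Hence the relative displacement of any two points satisfies a time-independent bound: the separation between $\gamma_1$ and $\gamma_2$ can change by at most $O(T)$ along any lifted path in time $T$.

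Consequently I would choose the target so that the required change in relative position exceeds what is achievable in time $T$: lift to $\R^d$ and demand that $y_1-y_2$ differ from $\gamma_1-\gamma_2$ by a representative whose length is forced to be large, e.g. by requiring the two trajectories to wind around the torus many times relative to each other. Since $u(t)$ affects both points identically, no amount of control speeds up the relative motion, which is governed solely by the bounded field $f$; integrating the differential inequality gives $\bigl|\,(P_T^u(\gamma_1)-P_T^u(\gamma_2))-(\gamma_1-\gamma_2)\,\bigr|\le 2LT$ in the cover, so targets requiring a larger relative shift are unreachable for time $\le T$. The mild technical point to handle carefully is the passage to the universal cover and the correct notion of ``relative winding'' on $\T^d$, ensuring that the chosen target ensemble is genuinely in $\hat M^N$ and genuinely unreachable; this bookkeeping, rather than any deep difficulty, is where I would be most careful.
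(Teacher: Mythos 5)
Your overall strategy for part (i) is the paper's, but two steps in your execution would fail as written. First, the set you initially propose, $\left\{f\in\mathfrak V^d \mid \M_f\supset\{e_1,\dots,e_d\}\right\}$, is dense but \emph{not} open: membership in $\mathfrak V^d$ requires $\#\M_f<\infty$, and the trigonometric polynomials form a meager subset of $\mathrm{Vec}_0\T^d$; the paper explicitly remarks that $\left\{f\in\mathfrak V^d\mid\Gamma=\Z^d\right\}$ is dense but not residual. You sense this and pivot to a Baire argument on the Lie-bracket-generating condition, which is the right move, but your version intersects the open dense sets $\{f \mid \text{bracket generating at }\gamma_j\}$ over a \emph{countable dense family of configurations} $\gamma_j$. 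For $f$ in the resulting residual set you only know bracket generation on a dense subset of $(\hat\T^d)^N$, which does not imply global controllability (orbits can drop dimension off that dense set). The paper instead exhausts $(\hat\T^d)^N=\bigcup_i K_{Ni}$ by compact sets and uses that $\{f \mid \text{bracket generating at every point of }K_{Ni}\}$ is open (by compactness and continuity of finitely many brackets in $f$) and dense, then intersects over all $N,i$. Two further points you elide: \cref{theorem : controllability ensembles} and Rashevsky--Chow apply to driftless linear-in-control systems, so before invoking them you must perform the reduction from the proof of \cref{theorem : affine system approx diffeo}, replacing $f+u$ by $\alpha(t)f+u(t)$; and density of the good fields comes from \cref{theorem : Lie algebra T2 and T3} applied to the dense (though non-residual) set $\left\{f\in\mathfrak V^d\mid\Gamma=\Z^d\right\}$, exactly as you intended.

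For part (ii) you have the correct core observation --- the common translation control cancels in the difference of two trajectories, so relative motion is governed by $f$ alone --- but the obstruction you choose to build on it does not work for all $T$. A target configuration on $\T^d$ prescribes $y_1-y_2$ only modulo $2\pi\Z^d$, so it always admits a representative of length at most $\pi\sqrt d$; there is no way to ``force'' a large relative winding on a terminal \emph{configuration}. Once $2\|f\|_\infty T$ exceeds the diameter of the torus, your ball of reachable relative displacements projects onto all of $\T^d$ and excludes no target. The obstruction that works for every $T$ is the one you mention only in a parenthesis and then abandon: the Lipschitz/Gronwall bound $\frac{d}{dt}\ln|x_1(t)-x_2(t)|\ge-\|f\|_1$, which gives $|x_1(T)-x_2(T)|\ge e^{-T\|f\|_1}\,|x_1(0)-x_2(0)|$, so configurations in which two of the points are sufficiently close are unreachable in time $\le T$. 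That is precisely the paper's proof of (ii).
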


\section{Proof of \cref{theorem : affine system approx diffeo}}

In what follows, $\mathrm{cone}S$ is the convex cone generated by the subset $S$ of a real vector space,
\begin{equation*}
    \mathrm{cone}S=\left\{\sum_i\alpha_ia_i \mid a_i\in S,\ \alpha_i\ge 0\right\},
\end{equation*} 
and $dw$ is the standard volume form on the torus. Let $f\in C^\infty(\T^d,\R^d)$ be a smooth vector function and $\theta\in\T^d$. We define a vector function $f_\theta$ by the formula
$f_\theta(x)=f(x+\theta),\ x\in\T^d$.
\\ \\
Let $t\geq 0$. By applying the variation formula to system \eqref{affine system}, see e.g. \cite[Section 2.7]{AgSa}, we obtain a decomposition of the flow
\begin{align*}
    P_t^{u(\cdot)}&=\f{\exp}\int_0^tf+u(\tau)d\tau \\
    &=\left(\f{\exp}\int_{0}^t \left(\Ad e^{\int_0^\tau u(s)ds}\right)fd\tau \right)\circ e^{\int_0^tu(s)ds} 
\end{align*}
where $(\Ad P^{-1})f=P_{*}f$ for any $P \in \mathrm{Diff} \T^d$. 
Let  $\theta(t)=\int_0^tu(s)ds$. Notice that $e^{\theta(t)}\in \mathrm{Diff} \T^d$ is the translation by $\theta(t)$. So
$$\left(\Ad e^{\int_0^\tau u(s)ds}\right)f=(\Ad e^{\theta(\tau)}) f=f_{\theta(\tau)}.$$Therefore, $$P_t^{u(\cdot)}=\left(\f{\exp}\int_0^t f_{\theta(\tau)}d\tau\right) \circ e^{\theta(t)}.$$
The map 
\begin{equation*}
    (\theta(\cdot),v) \mapsto \left(\f{\exp}\int_0^tf_{\theta(\tau)}d\tau \right)\circ e^v
\end{equation*}
is continuous from $L^1([0,t],\R^d)\times \R^d$ to $\mathrm{Diff}M$, and moreover the map 
\begin{equation}
    u(\cdot)\in L^1_{\mathrm{loc}}(\R_+,\R^d)\mapsto (\theta(\cdot),\theta(t))\in L^1([0,t],\R^d)\times \R^d
\end{equation}
has dense image in $L^1([0,t],\R^d)\times \R^d$, so the closure of the attainable set verifies 
\begin{equation*}
\left\{\f{\exp}\int_0^tf_{\theta(\tau)d\tau}\mid \theta(\cdot)\in L^1([0,t],\R^d)\right\}\circ\left\{e^v \mid v \in \R^d\right\}\subset \ov{\mathcal{A}}.
\end{equation*}

So the study of $ \ov{\mathcal{A}}$ is reduced to the study of the no more linear in the control system 
\begin{equation} \label{reduced system}
    \dot{x}=f_{\theta(t)}(x), \qquad \theta(t) \in \R^d,
\end{equation}
where $\int_{\T^d}f(\tau)d\tau=0$, see \cref{int0}, and $t\mapsto \theta(t)\in L^1_{\mathrm{loc}}(\R_+,\R^s)$. By standard convexification, see \cite[Th. 8.7]{AgSa}, the flow of system \eqref{reduced system} can approximate the flow of any convex combination of vector fields $f_\theta,\theta\in \R^d$. By re-scaling of the time, the flow of system \eqref{reduced system} can approximate the flow of any convex combination of vector fields $f_\theta, \theta \in \R^d$ up to a positive multiplicative constant, that is, the flow of any vector field in the convex subset 
$\mathrm{cone}\left\{f_\theta \mid \theta \in \R^d\right\}$.

\begin{lemma} \label{Lemma cone}
Let $f\in C^\infty(\T^d,\R^n)$. If $\int\limits_{\T^d}f(x)\,dw(x)=0$, then
$$
\overline{\mathrm{cone}\{f_\theta \mid \theta\in\T^d\}}=\overline{\mathrm{span}\{f_\theta \mid \theta\in\T^d\}},
$$
where the closure is taken in the $C^\infty$-topology.
\end{lemma}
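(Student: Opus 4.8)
The plan is to observe that the nontrivial inclusion is $\ov{\mathrm{span}}\subset\ov{\mathrm{cone}}$, since $\mathrm{cone}\{f_\theta\}\subset\mathrm{span}\{f_\theta\}$ gives the reverse inclusion after taking closures. So I will show that the closed convex cone $C:=\ov{\mathrm{cone}\{f_\theta\mid\theta\in\T^d\}}$ already contains $\ov{\mathrm{span}\{f_\theta\mid\theta\in\T^d\}}$. Equivalently, it suffices to prove that $C$ is symmetric (a closed convex cone with $C=-C$ is a linear subspace, hence contains the span of its generators and, being closed, the whole closed span). The single structural input I will exploit is the zero-mean hypothesis, rewritten in terms of translates: integrating over the shift parameter with the standard volume form gives
\begin{equation*}
\int_{\T^d}f_\theta\,dw(\theta)=\int_{\T^d}f(\cdot+\theta)\,dw(\theta)=\int_{\T^d}f\,dw=0,
\end{equation*}
so the zero vector field is a ``continuous convex combination'' (a Bochner integral) of the generators $f_\theta$ in the Fréchet space $C^\infty(\T^d,\R^n)$.

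The core of the argument is a separation step. Suppose, for contradiction, that some $v\in\ov{\mathrm{span}\{f_\theta\}}$ does not lie in $C$. Since $C$ is a nonempty closed convex cone in the locally convex space $C^\infty(\T^d,\R^n)$ (it carries the locally convex Fréchet topology defined by the seminorms $\|\cdot\|_m$), the Hahn--Banach separation theorem for a point and a closed convex cone yields a continuous linear functional $\ell$ with $\ell(v)<0$ and $\ell(c)\ge 0$ for all $c\in C$; in particular $\ell(f_\theta)\ge 0$ for every $\theta\in\T^d$. Now I integrate this inequality. Because $\theta\mapsto f_\theta$ is continuous from the compact torus into the Fréchet space (translation is continuous in every $C^m$-norm, $f$ being smooth), the map is Bochner integrable and $\ell$ commutes with the integral, so
\begin{equation*}
\int_{\T^d}\ell(f_\theta)\,dw(\theta)=\ell\!\left(\int_{\T^d}f_\theta\,dw(\theta)\right)=\ell(0)=0.
\end{equation*}
The integrand $\theta\mapsto\ell(f_\theta)$ is continuous and nonnegative with vanishing integral, hence identically zero. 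Therefore $\ell$ annihilates every $f_\theta$, hence all of $\mathrm{span}\{f_\theta\}$, and by continuity all of $\ov{\mathrm{span}\{f_\theta\}}$; in particular $\ell(v)=0$, contradicting $\ell(v)<0$. This contradiction shows $\ov{\mathrm{span}\{f_\theta\}}\subset C$, which together with the trivial reverse inclusion finishes the proof.

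The routine points to verify are the functional-analytic background rather than any computation: that $C^\infty(\T^d,\R^n)$ with the $C^\infty$-topology is a locally convex (Fréchet) space so that Hahn--Banach separation of a point from a closed convex cone applies, that $\theta\mapsto f_\theta$ is continuous into this space and thus Bochner integrable over the compact $\T^d$, and that a continuous linear functional may be pulled inside the integral. I expect the only genuine subtlety to be justifying the interchange $\ell(\int f_\theta\,dw)=\int\ell(f_\theta)\,dw$; if one prefers to avoid Bochner integration entirely, the same conclusion follows by replacing the integral with equidistributed Riemann sums $\tfrac{(2\pi)^d}{K}\sum_{k}f_{\theta_k}\to 0$, applying the continuous $\ell$, and passing to the limit, using the continuity of $\theta\mapsto\ell(f_\theta)$ to identify the limit of the averages with $\int_{\T^d}\ell(f_\theta)\,dw(\theta)$.
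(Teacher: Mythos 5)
Your proof is correct and follows essentially the same route as the paper: Hahn--Banach separation of the closed convex cone in the locally convex space $C^\infty(\T^d,\R^n)$, followed by averaging the separating functional over the translates and using $\int_{\T^d}f_\theta\,dw(\theta)=\int_{\T^d}f\,dw=0$ to reach a contradiction. The only cosmetic difference is that you separate a specific point $v$ of the closed span from the cone, whereas the paper separates under the assumption that the cone is not a vector space; the argument is otherwise identical.
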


\begin{proof}
    Assume that $\overline{\mathrm{cone}\{f_\theta\in\T^d\}}$ is not a vector space. Then, according to the standard separation theorem
for locally convex topological vector spaces, there exists $\varphi\in C^\infty(\T^d,\R^n)^*$ such that 
$\varphi$ restricted to ${\overline{\mathrm{span}\{f_\theta\mid \theta\in\T^d\}}}$ is not identically 0 and 
$\langle\varphi,f_\theta\rangle\le 0,\ \forall\,\theta\in\T^d$.

Note that $\theta\mapsto \langle \varphi,f_\theta\rangle$ is a continuous function on $\T^d$, hence it is strictly negative on an open subset of $\T^d$.
We have
$$
0>\int_{\T^d}\langle\varphi,f_\theta\rangle\,dw (\theta)=\langle\varphi,\int_{\T^d}f_\theta\,dw (\theta)\rangle.
$$
On the other hand, 
$$
\left(\int_{\T^d}f_\theta\,dw(\theta)\right)(x)=\int_{\T^d}f(x+\theta)\,dw(\theta)= \int_{\T^d}f(\theta)\,dw(\theta)=0,\qquad x\in\T^d.
$$
In other words, $\int\limits_{\T^d}f_\theta\,dw(\theta)=0$ and we obtain a contradiction which proofs the lemma.
\end{proof}

So to summarize, the flow of system \eqref{affine system} can approximate the flow of any vector field of the form $\alpha f_\theta + u$, with $\alpha \in \R$ and $\theta, u \in \R^d$. In particular, according to \cref{theorem : motion planning diffeo}, the flow of every vector field in $\ov{\mathrm{Lie}\left\{f_\theta +u \mid \theta,u \in \R^d\right\}}$ belongs to $\ov{\mathcal{A}}$. According to \cref{theorem : Lie algebra T2 and T3}, if $f \in \mathfrak{V}^d$ and if $\Gamma$ denotes the subgroup of $\Z^d$ generated by $\M_f$, then $\ov{\mathrm{Lie}\left\{f+u\mid u \in \R^d\right\}}=\mathrm{Vec}_0(\T^d)_\Gamma$ and $\ov{\mathcal{A}}=\mathrm{Diff}_0\T^d_\Gamma$.

\section{Proof of \cref{theorem : affine system ensemble controllability}} 

\textbf{(i)} Let us prove the first statement of \cref{theorem : affine system ensemble controllability}. Let $d=2$ or $d=3$. We recall that 
$$(\hat{\T}^d)^N = (\T^d)^N\setminus \left\{(y_1,\dots,y_N)\in (\T^d)^N \mid \exists k\neq \ell, y_k=y_\ell\right\}.$$
For every $f \in \mathrm{Vec}_0\T^d$, we consider the lift of control system \eqref{affine system} in the space of $N$-ensembles,
\begin{equation} \label{affine system ensemble controllability}
     \dot{x}_j=f(x_j)+u(t), \qquad u(t)\in \R^d,j \in \left\{1,\dots,N\right\},  
\end{equation}
where $x(t)=(x_1(t),\dots,x_N(t))\in (\hat{\T}^d)^N$ and $u(\cdot)$ is measurable and locally bounded. As explained in the proof of \cref{theorem : affine system approx diffeo}, the attainable set of system \eqref{affine system ensemble controllability} has the same closure of the attainable set of the following system,
\begin{equation} \label{linear system ensemble controllability}
    \dot{x}_j=\alpha(t)f(x_j)+u(t), \qquad \alpha(t)\in \R,u(t)\in \R^d,j \in \left\{1,\dots,N\right\},
\end{equation}
where $\alpha(\cdot)$ and $u(\cdot)$ are measurable and locally bounded. System \eqref{linear system ensemble controllability} is linear with respect to the control. According to the Rashevski -- Chow theorem, such a system is globally controllable if it is Lie bracket generating. Let us prove that there exists a residual set $\mathcal{R}\subset \mathrm{Vec}_0\T^d$ such that for every $f \in \mathcal{R}$, for every $N\in \N^*$, system \eqref{affine system ensemble controllability} is Lie bracket generating in $(\hat{\T}^d)^N$.

Let us fix $N \in \N^*$. We consider $f\in \mathfrak{V}^d$ such that $\Gamma =\Z^d$, then according to \cref{theorem : Lie algebra T2 and T3}, $\ov{\mathrm{Lie}\left\{f+u \mid u \in \R^d\right\}}=\mathrm{Vec}_0\T^d$. In this case, as explained in the proof of \cref{theorem : controllability ensembles}, system \eqref{affine system ensemble controllability} is Lie bracket generating at every point of $(\hat\T^d)^N$. 
\begin{rem}
    Although the set $\left\{f \in \mathfrak{V}^d \mid \Gamma = \Z^d\right\}$ is dense in $\mathrm{Vec}_0\T^d$, it is not residual.
\end{rem}
The manifold $(\hat\T^d)^N$ is the union of a countable number of compacts, $(\hat\T^d)^N=\bigcup\limits_iK_{Ni}$, where 
$K_{Ni}\Subset(\hat\T^d)^N,\ i=1,2,\dots$. The set of vector fields $f\in\mathrm{Vec}_0\T^d$ such that system \eqref{affine system ensemble controllability} is Lie bracket generating at
every point of $K_{Ni}$ is open. Moreover, we know that it is dense, hence it is open dense. The desired residual set is just the intersection
of these open dense subsets for all $Ni$.
\\ \\
\textbf{(ii)} Let us prove the second statement of \cref{theorem : affine system ensemble controllability}. Let $d\geq 2$, $f\in \mathrm{Vec}_0\T^d$, $N\geq 2$ and $T>0$. Let $t\mapsto x(t)=(x_1(t),\dots,x_N(t))$ be the solution of \eqref{affine system ensemble controllability}. For every $t \in [0,T]$, $x_1(t)\neq x_2(t)$ because $x_1(0)\neq x_2(0)$, and so $t\mapsto \xi(t)=\ln|x_1(t)-x_2(t)|$, $t\in [0,T]$, is well defined. Moreover, for every $t \in [0,T]$,
\begin{equation*}
    \dot{\xi}(t)=\frac{\la f(x_1(t))-f(x_2(t)),x_1(t)-x_2(t)\ra}{|x_1(t)-x_2(t)|} \\
    \geq -\|f\|_1,
\end{equation*}
and so $|\xi(t)|\leq \xi(0)-T\|f\|_1$. Then $|x_1(t)-x_2(t)|\geq e^{-T\|f\|_1}|x_1(0)-x_2(0)|$ for every $t \in [0,T]$, and so the configurations where $x_1(t)$ and $x_2(t)$ are very close are not reachable in any time $t \in [0,T]$.

\section{Proof of \cref{theorem : Lie algebra T2 and T3}} \label{section : Lie algebra computation T2 T3}

The proof of \cref{theorem : Lie algebra T2 and T3} requires several steps and the study depends on the dimension of the considered torus. For the bi-dimensional torus, the statement of \cref{theorem : Lie algebra T2 and T3} is proved by \cref{theorem : algebra dim2}, and for the tri-dimensional torus by \cref{theorem : algebra dim3 span3}. 

\subsection{Bi-dimensional torus}

On $\T^2$, the volume form $dx \we dy$ coincides with the symplectic form, and every divergence free vector field  can be written as the sum of a Hamiltonian vector field $\f{h} \in \mathrm{Ham}\T^2$ and a constant vector field. Indeed, if we denote $\omega=dx \we dy$, according to Cartan's formula, the Lie derivative of $\omega$ along any vector field $V\in \mathrm{Vec}\T^2$ verifies
    \begin{equation*}
        \mathcal{L}_V\omega= (i_V\circ d + d \circ i_V)\omega=d\circ i_V\omega.
    \end{equation*}
    If $\mathrm{div}V=0$, then $i_V\omega$ is closed, so there exists a constant vector field $u=u_1\p_{x_1}+u_2\p_{x_2}$ such that $di_{V+u}\omega=0$, so $V+u$ verifies $\mathcal{L}_{V+u}\omega=0$ and $V+u$ is Hamiltonian.
    
For this reason we can assume that there exists a smooth function $h \in C^{\infty}(\T^2,\R)$, associated to the Hamiltonian vector field 
\begin{equation*}
    \f{h}(x,y)=-\frac{\p h}{\p y}(x,y)\p_x + \frac{\p h}{\p x}(x,y)\p_y, \qquad (x,y)\in \T^2,
\end{equation*}
such that $f=\f{h}$.

The non-zero modes that appear in the Fourier decomposition of the function $h$ are exactly those that appear in the decomposition of the vector field $\f{h}$. The set of modes in the decomposition of $h$ is denoted by $\M_h$, and the subgroup of $\Z^2$ generated by $\M_h$ is denoted by $\Gamma$. Note that the subgroups of $\Z^2$ generated by $\M_h$ and $\M_f$ are the same.  We recall that for $a,b \in C^{\infty}(\T^2,\R)$, their Poisson bracket is defined by 
\begin{equation*}
    \left\{a,b\right\}=\frac{\p a}{\p x}\frac{\p b}{\p y}- \frac{\p a}{\p y}\frac{\p b}{\p x},
\end{equation*}
and the arrow map $C^{\infty}(\T^2,\R)\mapsto \mathrm{Ham}\T^2$ preserves the Lie algebra structure owing to the relation 
\begin{equation*}
    \f{ \left\{ a,b \right\}}=\left[ \f{a},\f{b} \right].
\end{equation*}
\begin{theorem} \label{theorem : algebra dim2} 
Let $\# \M_h < \infty$. 

\begin{itemize}
    \item If $\mathrm{span} \Gamma=\R^2$, then 
    \begin{equation*}
        \mathrm{Lie}\left\{\f{h}+u\mid u \in \R^d\right\} = \mathrm{span} \left\{ \f{\cos}\la m, \cdot \ra, \f{\sin}\la m, \cdot \ra,\p_x,\p_y \mid m \in \Gamma  \right\}.
    \end{equation*}
    \item If $\mathrm{span} \Gamma$ is of dimension 1, then
    \begin{equation*}
        \mathrm{Lie}\left\{\f{h}+u\mid u \in \R^d\right\} = \mathrm{Lie} \left\{ \f{\cos}\la m, \cdot \ra, \f{\sin}\la m, \cdot \ra,\p_x,\p_y \mid m \in \M_h  \right\}.
    \end{equation*}
\end{itemize}
\end{theorem}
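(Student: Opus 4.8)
\section*{Proof plan for \cref{theorem : algebra dim2}}

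The plan is to prove the two inclusions separately, relying throughout on two structural identities. Write $e_m:=e^{i\la m,x\ra}$ and, for $m,n\in\Z^2$, set $m\we n:=m_1n_2-m_2n_1$. The first identity is the Poisson rule $\{e_m,e_n\}=-(m\we n)\,e_{m+n}$, which through the arrow map $a\mapsto\f a$ (and its relation $[\f a,\f b]=\f{\{a,b\}}$) gives, in real form, $[\f{\cos}\la m,\cdot\ra,\f{\cos}\la n,\cdot\ra]=\tfrac12(m\we n)(\f{\cos}\la m-n,\cdot\ra-\f{\cos}\la m+n,\cdot\ra)$ and similar formulas for the other pairs: a bracket of two modes produces only the modes $m\pm n$, and it vanishes when $m$ and $n$ are collinear. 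The second identity concerns a constant field $u$: since $u=\f\ell$ for the linear function $\ell(x)=u_2x_1-u_1x_2$, one has $[u,\f a]=\f{\la u,\nabla a\ra}$, so on the plane $\mathrm{span}\{\f{\cos}\la m,\cdot\ra,\f{\sin}\la m,\cdot\ra\}$ the operator $\ad_u$ acts as $\la m,u\ra$ times the rotation generator, and in particular $\ad_u^2$ acts as the scalar $-\la m,u\ra^2$ on that plane.

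For the inclusion $\subseteq$, let $L$ denote the right-hand side in each case. Every generator $\f h+u$ lies in $L$, since $\M_h\subset\Gamma$ (resp. the generators are exactly the $\f{\cos}\la m,\cdot\ra,\f{\sin}\la m,\cdot\ra$ with $m\in\M_h$) and $u$ is a combination of $\p_x,\p_y$. It then suffices to check that $L$ is a Lie subalgebra. In the full-rank case this follows from the two identities: $[u,\cdot]$ preserves each mode plane and $[\f{\cos}\la m,\cdot\ra,\f{\cos}\la n,\cdot\ra]$ lands in the modes $m\pm n\in\Gamma$ because $\Gamma$ is a subgroup. In the one-dimensional case every pair of modes in $\M_h$ is collinear, so all Poisson brackets among the listed Hamiltonian fields vanish and $L$ is again closed.

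For the inclusion $\supseteq$ I would argue in four steps. First, differences $(\f h+u)-(\f h+u')=u-u'$ of generators give every constant field, hence $\p_x,\p_y\in\mathrm{Lie}\{\f h+u\}$, and therefore also $\f h=(\f h+u)-u$. Second, I separate $\f h$ into its Fourier modes: grouping $\pm m$, write $\f h=\sum_m(c_m\f{\cos}\la m,\cdot\ra+s_m\f{\sin}\la m,\cdot\ra)$ over representatives $m\in\M_h$. Since $\ad_u^2$ acts as the scalar $-\la m,u\ra^2$ on the $m$-th summand, and these scalars are pairwise distinct for a generic $u$, a Vandermonde argument applied to $\f h,\ad_u^2\f h,\ad_u^4\f h,\dots$ isolates each summand, and a further bracket with a suitable $u$ rotating within that plane yields both $\f{\cos}\la m,\cdot\ra$ and $\f{\sin}\la m,\cdot\ra$ for every $m\in\M_h$. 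Third, for two already-reached modes $m,n$ with $m\we n\ne0$, the Poisson identity produces the combination $\f{\cos}\la m-n,\cdot\ra-\f{\cos}\la m+n,\cdot\ra$, and the same $\ad_u^2$-separation (valid once $\la m,u\ra\la n,u\ra\ne0$, so the two modes carry distinct eigenvalues) reaches $m+n$ and $m-n$ individually. Thus the set of reachable modes is the closure $R$ of $\M_h$ under the operation $m,n\mapsto m\pm n$ restricted to non-collinear pairs.

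The fourth step is the combinatorial heart, where the hypothesis on $\mathrm{span}\,\Gamma$ enters. In the one-dimensional case all modes are collinear, the operation is never applicable, $R=\M_h$, and we recover exactly the claimed Lie algebra. In the full-rank case I would prove the lattice lemma that the closure of a generating set of the rank-two lattice $\Gamma$ under $m,n\mapsto m\pm n$ (for $m\we n\ne0$) is all of $\Gamma\setminus\{0\}$. The mechanism: for independent $a,b\in R$ one has $(a+kb)\we b=a\we b\ne0$, so the whole line $a+\Z b\subset R$ by induction, $R$ is symmetric, and iterating gives the full sublattice $(\Z a+\Z b)\setminus\{0\}\subset R$. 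It remains to reach a $\Z$-basis of $\Gamma$: choosing independent $a,b\in R$ with $|a\we b|$ minimal and reducing any generator $m\in\M_h$ modulo $a,b$ by the line-moves produces an independent pair of strictly smaller $|{\cdot}\we{\cdot}|$ unless $\Z a+\Z b=\Gamma$, so minimality forces $\Z a+\Z b=\Gamma$ and hence $R=\Gamma\setminus\{0\}$. I expect this Euclidean-style lattice reduction, where one must keep the non-collinearity constraint satisfied at every move (in particular handling non-primitive $a,b$), to be the main obstacle; the analytic steps are routine once the eigenvalue bookkeeping and the generic choices of $u$ are in place.
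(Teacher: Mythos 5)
Your proposal is correct, and its skeleton is the same as the paper's: first isolate the individual Fourier modes of $\f{h}$ by iterated $\ad$'s of constant fields, then generate new modes by Poisson brackets of non-collinear pairs, and finally run a combinatorial argument showing the closure of $\M_h$ under these operations is all of $\Gamma$. The implementations of all three steps differ, though, in ways worth recording. For mode isolation (the paper's \cref{lemma canceled modes dim2}) the paper applies products $\prod(m_{1x}^2-\ad_{\p_x}^2)(m_{2y}^2-\ad_{\p_y}^2)$, which only pins down $(|m_x|,|m_y|)$ and forces an extra step to separate $m_0$ from $(m_{0x},-m_{0y})$; your single generic $u$ with $\ad_u^2$ acting as the scalar $-\la m,u\ra^2$ plus a Vandermonde inversion separates all mode planes at once and is cleaner. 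For mode addition (the paper's \cref{addition modes dim 2}) the paper combines two brackets to output $\f{\sin}\la m+n,\cdot\ra$ directly, while you output a mix of the $m+n$ and $m-n$ planes and re-separate with $\ad_u^2$; both are fine (your eigenvalue-distinctness condition $\la m,u\ra\la n,u\ra\ne 0$ is exactly right, and $m\pm n\ne 0$ is guaranteed by $m\we n\ne 0$). The real divergence is the lattice step: the paper writes an arbitrary $m\in\Gamma$ as $m_1\pm\cdots\pm m_p$ with $m_i\in\M_h$ and inducts, inserting an auxiliary non-collinear $m_3$ when two summands are collinear; you instead prove that the line-moves fill the sublattice $\Z a+\Z b$ for any independent $a,b$ in the reachable set and then use a minimality/Euclidean-reduction argument on $|a\we b|$ to force $\Z a+\Z b=\Gamma$. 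Your version buys a self-contained, checkable lattice lemma at the cost of the care you already flag (detouring through $a$ when a reduced generator lands on the line $\R b$, and handling non-primitive $a,b$); I verified these cases go through, so there is no gap, only bookkeeping to write out.
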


Throughout the proof of \cref{theorem : algebra dim2}, we will use the notation $\mathfrak L_h = \mathrm{Lie}\left\{\f{h}+u \mid u \in \R^d\right\}$. We will also make use of the following identity: 
\begin{equation}\label{eqn:lemma derivation}
        \ad_{\p_x}^k\ad_{\p_y}^\ell\f{h}=\f{\frac{\p^{k+\ell}h}{\p x^k\p y^\ell}} \in \mathfrak{L}_h,\qquad  k,\ell\in \N.
\end{equation}
    
\begin{lemma} \label{lemma canceled modes dim2} If $\# \M_h < \infty$, then 
\begin{equation*}
        \mathfrak{L}_h=\mathrm{Lie} \left\{ \f{\cos}\la m,  \cdot \ra,\p_x,\p_y \mid m \in \M_h\right\}.
\end{equation*}
\end{lemma}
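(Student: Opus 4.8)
The plan is to prove the two inclusions of the asserted equality separately, the crux being a spectral (Lagrange interpolation) argument that isolates a single Fourier mode of $h$ inside $\mathfrak{L}_h$.

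First, some preliminary observations. Taking $u=0$ gives $\f h\in\mathfrak L_h$, and then $(\f h+u)-\f h=u\in\mathfrak L_h$ for every constant $u$; in particular $\p_x,\p_y\in\mathfrak L_h$, so $\mathfrak L_h$ is stable under $\ad_{\p_x}$ and $\ad_{\p_y}$. Writing $h=\sum_{m\in\M_h}p_me^{i\la m,\cdot\ra}$ with $p_{-m}=\bar p_m$ (and discarding $m=0$, which contributes $0$ to $\f h$), the identity \eqref{eqn:lemma derivation} says that $\ad_{\p_x}$ and $\ad_{\p_y}$ act as the derivations $g\mapsto\p_xg$ and $g\mapsto\p_yg$ on Hamiltonians. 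On the complex exponential $\f{e^{i\la m,\cdot\ra}}$ they therefore act diagonally, with $\ad_{\p_x}\f{e^{i\la m,\cdot\ra}}=im_1\f{e^{i\la m,\cdot\ra}}$ and $\ad_{\p_y}\f{e^{i\la m,\cdot\ra}}=im_2\f{e^{i\la m,\cdot\ra}}$. Thus $\f h$ is a sum of joint eigenvectors of the commuting operators $\ad_{\p_x},\ad_{\p_y}$, with joint eigenvalues $(im_1,im_2)$, $m\in\M_h$, which are pairwise distinct because $m\mapsto(im_1,im_2)$ is injective on $\Z^2$.

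The heart of the argument is to extract from $\f h$ the part supported on a single mode pair $\{m,-m\}$. Since $\M_h$ is finite and the joint eigenvalues are distinct, Lagrange interpolation produces a polynomial $Q$ with $Q(im_1,im_2)=1$ and $Q$ vanishing at every other joint eigenvalue $(im'_1,im'_2)$, $m'\in\M_h$ (including $m'=-m$). Replacing $Q$ by its symmetrization $R=\tfrac12(Q+\tilde Q)$, where $\tilde Q$ has conjugated coefficients, yields a polynomial with real coefficients such that $R(\ad_{\p_x},\ad_{\p_y})\f h=\tfrac12\f{h_m}$, where $\f{h_m}:=p_m\f{e^{i\la m,\cdot\ra}}+\bar p_m\f{e^{-i\la m,\cdot\ra}}$ is the real mode-$m$ part of $\f h$. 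Reality of $R$ guarantees this is a genuine (real) vector field, and since $\p_x,\p_y\in\mathfrak L_h$ and $\mathfrak L_h$ is a Lie algebra, it lies in $\mathfrak L_h$. This reality/symmetrization bookkeeping is the only delicate point; the rest is formal.

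Finally, I separate cosine from sine inside the two-dimensional real space $V_m=\mathrm{span}\{\f{\cos}\la m,\cdot\ra,\f{\sin}\la m,\cdot\ra\}$. Choosing a coordinate with $m_1\neq0$ (or else $m_2\neq0$), \eqref{eqn:lemma derivation} gives $\ad_{\p_x}\f{\cos}\la m,\cdot\ra=-m_1\f{\sin}\la m,\cdot\ra$ and $\ad_{\p_x}\f{\sin}\la m,\cdot\ra=m_1\f{\cos}\la m,\cdot\ra$, so $\ad_{\p_x}$ acts on $V_m$ as $m_1$ times a quarter-turn; hence $\f{h_m}$ and $\ad_{\p_x}\f{h_m}$ are linearly independent and span $V_m$, giving $\f{\cos}\la m,\cdot\ra\in\mathfrak L_h$. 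Together with $\p_x,\p_y\in\mathfrak L_h$ this shows $\mathrm{Lie}\{\f{\cos}\la m,\cdot\ra,\p_x,\p_y\mid m\in\M_h\}\subset\mathfrak L_h$. For the reverse inclusion, the right-hand side contains each $\f{\cos}\la m,\cdot\ra$ and, by bracketing with $\p_x$ or $\p_y$, each $\f{\sin}\la m,\cdot\ra$, hence each $\f{h_m}$ and therefore $\f h=\sum_m\f{h_m}$; as it also contains all constants $u$, it contains every generator $\f h+u$ of $\mathfrak L_h$. The two inclusions give the claimed equality. The main obstacle is the mode-isolation step, and specifically ensuring the interpolating operator can be taken with real coefficients so that it preserves the class of real vector fields.
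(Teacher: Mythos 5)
Your proof is correct, and it follows the same overall strategy as the paper's: both isolate individual Fourier modes of $h$ by applying polynomials in the commuting operators $\ad_{\p_x},\ad_{\p_y}$ to $\f{h}$, and then separate cosine from sine by one further differentiation and a linear combination. The implementations of the mode-isolation step differ. The paper works only with the squared operators $\ad_{\p_x}^2,\ad_{\p_y}^2$, whose eigenvalues on the modes are real, so the interpolating products of factors of the form $(\alpha-\ad_{\p_x}^2)(\beta-\ad_{\p_y}^2)$ automatically preserve real vector fields; the price is that these operators cannot distinguish $m_0$ from $m_0'=(m_{0x},-m_{0y})$, so an extra step (applying $\p^2/\p x\p y$ and taking linear combinations) is needed to separate the two conjugate pairs, plus a small case analysis when a coordinate of $m_0$ vanishes. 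You instead interpolate directly on the first-order joint eigenvalues $(im_1,im_2)$ and restore reality by symmetrizing the coefficients of the Lagrange polynomial; this isolates the pair $\{m,-m\}$ in one stroke and avoids both the extra separation step and the case analysis. Your bookkeeping is right: since $\tilde Q(im'_1,im'_2)=\overline{Q(-im'_1,-im'_2)}$ and $\M_h$ is symmetric under $m'\mapsto -m'$, the real-coefficient polynomial $R=\tfrac12(Q+\tilde Q)$ takes the value $\tfrac12$ at the eigenvalues of $\pm m$ and $0$ at the others, so $R(\ad_{\p_x},\ad_{\p_y})\f{h}=\tfrac12\f{h_m}\in\mathfrak{L}_h$, and the quarter-turn argument then yields $\f{\cos}\la m,\cdot\ra\in\mathfrak{L}_h$. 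You also record the easy reverse inclusion explicitly, which the paper leaves implicit. Both arguments are sound; yours is slightly more streamlined at the cost of a brief excursion through complex coefficients.
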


\begin{proof} Let $h=\sum_{m\in \M_h}h_me^{i \la m, \cdot \ra}$ be the finite Fourier decomposition of $h$, where the coefficients $h_m$ are complex. The function $f$ is real-valued so $h_{-m}=\ov{h_m}$ for every $m\in \M_h$. Let us prove that $\f{\cos}\la m_0, \cdot \ra \in \mathfrak{L}_h$ for every $m_0 \in \M_h$. By a straightforward computation, 
\begin{equation*}
    \ad^2_{\p_x}\f{h}=-\sum_{m \in \M_h}m_x^2h_m\f{e^{i \la m, \cdot\ra}} \in \mathfrak{L}_h,
    \end{equation*}
and then for every $\alpha,\beta \in \R$,
\begin{equation*}
    (\alpha - \ad_{\p_x}^2)(\beta - \ad_{\p_y}^2)\f{h}=\sum_{m\in \M_h}(\alpha-m_x^2)(\beta - m_y^2)h_me\f{^{i \la m, \cdot\ra}} \in \mathfrak{L}_h.
\end{equation*}
Let $m_0=(m_{0x},m_{0y})\in \M_h$. For any $m\in \Z^2$ we denote $|m|=|m_0|$ if $|m_x|=|m_{0x}|$ and $|m_y|=|m_{0y}|$. By iteration and thanks to a specific choice of $\alpha,\beta\in \R$, we obtain that
\begin{equation*}
    \prod_{m_1,m_2\in \M_h \atop m_{1x}\neq m_{0x}, m_{2y}\neq m_{0y}}(m_{1x}^2-\ad_{\p_x}^2)(m_{2y}^2-\ad_{\p_y}^2)\f{h}=\gamma \sum_{|m|=|m_0|}h_m\f{e^{i \la m , \cdot\ra }} \in \mathfrak{L}_h,
\end{equation*}
where 
\begin{equation*}
    \gamma = \prod_{m_1,m_2\in \M_h \atop m_{1x}\neq m_{0x}, m_{2y}\neq m_{0y}}(m_{1x}^2-m_{0x}^2)(m_{2y}^2-m_{0y}^2)\neq 0,
\end{equation*}
and so $\sum_{|m|=|m_0|}h_m\f{e^{i \la m , \cdot\ra }} \in \mathfrak{L}_h$. The function $h$ is real-valued, so $\ov{h_m} =h_{-m}$ for every $m\in \M_h$. If $m_0=(m_{0x},m_{0y})$, we denote $m_0'=(m_{0x},-m_{0y})$, and then
\begin{equation*}
    \sum_{|m|=|m_0|}h_m\f{e^{i \la m , \cdot\ra }}=2\mathfrak{Re}(h_{m_0}\f{e^{i\la m_0, \cdot \ra}} + h_{m_0'}\f{e^{i\la m_0', \cdot \ra}}) \in \mathfrak{L}_h. 
\end{equation*}
Let us consider the case where $m_{0x},m_{0y}\neq 0$. According to formula \eqref{eqn:lemma derivation},
\begin{equation*}
    -\sum_{|m|=|m_0|}h_m\f{\frac{\p^2}{\p x\p y}e^{i \la m , \cdot\ra }}=2m_{0x}m_{0y}\mathfrak{Re}(h_{m_0}\f{e^{i\la m_0, \cdot \ra}} - h_{m_0'}\f{e^{i\la m_0', \cdot \ra}}) \in \mathfrak{L}_h,
\end{equation*}
so by linear combination $\mathfrak{Re}(h_{m_0}\f{e^{i\la m_0, \cdot \ra}})=\mathfrak{Re}(h_{m_0})\f{\cos} \la m_0,\cdot \ra - \mathfrak{Im}(h_{m_0})\f{\sin}\la m_0,\cdot\ra\in \mathfrak{L}_h$. Taking the derivative with respect to one variable we obtain that
\begin{equation*}
    -\mathfrak{Re}(h_{m_0})\f{\sin} \la m_0,\cdot \ra - \mathfrak{Im}(h_{m_0})\f{\cos}\la m_0,\cdot\ra \in \mathfrak{L}_h,
\end{equation*}
and so by linear combination, $(\mathfrak{Re}(h_{m_0})^2+\mathfrak{Im}(h_{m_0})^2)\f{\cos}\la m_0, \cdot \ra \in \mathfrak{L}_h$, and so $\f{\cos}\la m_0,\cdot \ra \in \mathfrak{L}_h$. The other cases can be easily derived from the previous one.
\end{proof}

\begin{lemma} \label{addition modes dim 2}
    Let $m=(m_1,m_2)$ and $n=(n_1,n_2)$. Let $m \we n = m_1n_2-m_2n_1$. If $m,n \in \M_h$ and if $m\we n \neq 0$, then $\f{\cos}\la m+n, \cdot \ra \in \mathfrak{L}_h$.
\end{lemma}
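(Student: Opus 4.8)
The plan is to produce $\f{\cos}\la m+n,\cdot\ra$ as an explicit linear combination of Lie brackets of the frequency-$m$ and frequency-$n$ fields, using the Lie-algebra isomorphism $\f{\{a,b\}}=[\f a,\f b]$. First I would record that all four fields $\f{\cos}\la m,\cdot\ra,\ \f{\sin}\la m,\cdot\ra,\ \f{\cos}\la n,\cdot\ra,\ \f{\sin}\la n,\cdot\ra$ lie in $\mathfrak{L}_h$. Since $m,n\in\M_h$, \cref{lemma canceled modes dim2} already gives the two cosine fields. The two sine fields then follow by differentiation: by the same derivation rule as in \eqref{eqn:lemma derivation}, one has $\ad_{\p_x}\f{g}=\f{\p_x g}$ and $\ad_{\p_y}\f{g}=\f{\p_y g}$ for any smooth $g$, so applying $\ad_{\p_x}$ (or $\ad_{\p_y}$ when $m_1=0$) to $\f{\cos}\la m,\cdot\ra$ returns a nonzero multiple of $\f{\sin}\la m,\cdot\ra$, because $m\neq 0$. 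Consequently every Lie bracket of these four fields also lies in $\mathfrak{L}_h$.

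The heart of the matter is a short Poisson-bracket computation. Writing $\phi=\la m,\cdot\ra$ and $\psi=\la n,\cdot\ra$, direct differentiation gives
\[
\{\cos\phi,\cos\psi\}=(m\we n)\sin\phi\sin\psi,\qquad \{\sin\phi,\sin\psi\}=(m\we n)\cos\phi\cos\psi,
\]
the factor $m\we n=m_1n_2-m_2n_1$ appearing precisely as the determinant built into the Poisson bracket. Applying the product-to-sum identities $\sin\phi\sin\psi=\tfrac12(\cos\la m-n,\cdot\ra-\cos\la m+n,\cdot\ra)$ and $\cos\phi\cos\psi=\tfrac12(\cos\la m-n,\cdot\ra+\cos\la m+n,\cdot\ra)$ and translating through the arrow map, this yields
\[
[\f{\sin}\la m,\cdot\ra,\f{\sin}\la n,\cdot\ra]-[\f{\cos}\la m,\cdot\ra,\f{\cos}\la n,\cdot\ra]=(m\we n)\,\f{\cos}\la m+n,\cdot\ra .
\]
The left-hand side belongs to $\mathfrak{L}_h$ by the first step, and the hypothesis $m\we n\neq 0$ permits division by this scalar, so I conclude $\f{\cos}\la m+n,\cdot\ra\in\mathfrak{L}_h$, as required.

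There is no deep obstacle here; the only point demanding care is the choice of linear combination. Each of the two brackets separately mixes the frequencies $m+n$ and $m-n$, so I must subtract them in exactly the order above to annihilate the unwanted $m-n$ contribution — the opposite signs in front of $\cos\la m+n,\cdot\ra$ in the two product-to-sum formulas are what make this cancellation work. I would also keep track of the nonvanishing scalar factors (the coefficient $-m_1$ or $-m_2$ arising when passing from cosine to sine, and the overall $m\we n$) to make sure each division is legitimate.
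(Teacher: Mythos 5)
Your proof is correct and takes essentially the same route as the paper's: both isolate the frequency $m+n$ by combining two Poisson brackets of trigonometric functions at frequencies $m$ and $n$, with the hypothesis $m\we n\neq 0$ justifying the final division. The only (immaterial) difference is that the paper sums the mixed brackets $\{\sin\la n,\cdot\ra,\cos\la m,\cdot\ra\}$ and $\{\cos\la n,\cdot\ra,\sin\la m,\cdot\ra\}$ to reach $\f{\sin}\la m+n,\cdot\ra$ first and then differentiates via \eqref{eqn:lemma derivation}, whereas you subtract $\{\cos,\cos\}$ from $\{\sin,\sin\}$ to hit $\f{\cos}\la m+n,\cdot\ra$ directly.
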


\begin{proof}
According to \cref{lemma canceled modes dim2}, 
\begin{align*}
    \left\{\sin\la n, \cdot \ra,\cos\la m, \cdot \ra \right\}&=(m \we n)\sin \la m, \cdot \ra \cos \la n, \cdot \ra \in \mathfrak{L}_h, \\
    \left\{\cos\la n, \cdot \ra,\sin \la m, \cdot \ra \right\}&=(m \we n) \cos \la m,\cdot \ra \sin \la n, \cdot \ra \in \mathfrak{L}_h.
\end{align*}
So by linear combination $\f{\sin} \la m+n, \cdot \ra \in \mathfrak{L}_h$. By \eqref{eqn:lemma derivation}, $\f{\cos}\la m+n, \cdot \ra \in \mathfrak{L}_h$.
\end{proof}

\begin{proof}[Proof of \cref{theorem : algebra dim2}]
If $\mathrm{span} \M_h$ is of dimension 1, we can assume up to an orthonormal change of variables that $\frac{\p}{\p y}h=0$. The Poisson Bracket of two functions that only depend on $x$ is zero, so 
\begin{equation*}
    \mathrm{Lie}\left\{ \f{\cos}\la m,\cdot \ra,\p_x,\p_y \mid m \in \M_h\right\}=\mathrm{span} \left\{\f{\cos}\la m,\cdot \ra, \f{\sin}\la m,\cdot \ra,\p_x, \p_y \mid m \in \mathfrak{L}_h \right\}.
\end{equation*}
If $\mathrm{span}\M_h=\R^2$, let us introduce the sets $\mathcal{I}_k(h),k \in \N^{*}$, defined by $\mathcal{I}_0(h)=\M_h$ and 
\begin{equation*}
    \mathcal{I}_{k+1}(h)=\mathcal{I}_k(h) \cup \left\{m+n \: | \: m,n \in \mathcal{I}_k(h), m \we n \neq 0\right\}.
\end{equation*}
According to \cref{addition modes dim 2}, $\f{\cos}\la m, \cdot \ra \in \mathfrak{L}_h$ for every $m \in \cup_{k \in \N}\mathcal{I}_k(h)$. But if $\mathrm{span} \M_h=\R^2$, it is easy to see that $\cup_{k \in \N}\mathcal{I}_k(h)=\Gamma$. Indeed every element $m \in \Gamma$ can be written as a sum $m=m_1\pm\dots \pm m_p$, with $m_1,\dots, m_p\in \M_h$. Note that if $m \in \M_h$ then it is also verified that $\f{\cos}\la -m, \cdot \ra,\f{\sin}\la -m,\cdot \ra \in \mathfrak{L}_h$. If $m=m_1 +m_2$ and if $m_1 \we m_2=0$, necessarily there exists $m_3\in \M_h$ such that $m_1 \we m_3 \neq 0$. Then $m_1+m_3\in \cup_{k \in \N}\mathcal{I}_k(h)$ and $(m_1 + m_3)\we m_2 \neq 0$, so $m_1+m_2+m_3 \in \cup_{k \in \N}\mathcal{I}_k(h)$, and $(m_1+m_2+m_3)\neq -m_3$, so finally $m_1+m_2 \in \cup_{k \in \N}\mathcal{I}_k(h)$ and 
\begin{equation*}
    \mathfrak{L}_h=\mathrm{Lie} \left\{ \f{\cos}\la m, \cdot \ra, \p_x,\p_y \mid m \in \Gamma\right\}=\mathrm{span} \left\{\f{\cos}\la m, \cdot \ra, \f{\sin}\la m, \cdot \ra,\p_x, \p_y \mid m \in \Gamma \right\}.
\end{equation*}
Indeed the Lie algebra is composed of linear combinations and derivatives of the modes present in $\Gamma$, which is closed. 
\end{proof}

In order to check that $\Gamma=\Z^2$, and so that $\mathrm{Lie}\left\{\f{h}+u \mid u \in \R^d\right\}$ is dense in $\mathrm{Vec}_0M$, we can apply the following criterion from \cite[Lem. 1]{AgSa05}.

\begin{lemma}
    The subgroup generated by $\M_h$ is equal to $\Z^2$ if and only if the greatest common divisor (g.c.d) of the numbers $\left\{ m \we n \mid m,n \in \M_h\right\}$ equals 1.
\end{lemma}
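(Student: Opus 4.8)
The plan is to identify the number $D:=\gcd\{m\we n \mid m,n\in\M_h\}$ with the index $[\Z^2:\Gamma]$ of the subgroup $\Gamma$ generated by $\M_h$; once this identification is established, the subgroup equals $\Z^2$ exactly when the index is $1$, which is exactly when $D=1$.

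First I would dispose of the degenerate case. If $\mathrm{span}\,\M_h\neq\R^2$, then all the vectors of $\M_h$ are collinear, every determinant $m\we n$ vanishes, so $D=0\neq 1$; on the other hand $\Gamma$ has rank at most one and cannot equal $\Z^2$. Thus both sides of the stated equivalence are false, and I may assume $\mathrm{span}\,\M_h=\R^2$. In that case $\Gamma$ is a full-rank sublattice of $\Z^2$, hence admits a $\Z$-basis $\{b_1,b_2\}$, and I will use the standard fact that $|b_1\we b_2|=[\Z^2:\Gamma]$, i.e.\ the absolute value of the determinant of a basis equals the index.

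The core consists of two divisibility relations that together pin down $D$. On one hand, writing each $m\in\M_h$ in the basis as $m=c_1b_1+c_2b_2$ with $c_i\in\Z$, bilinearity and antisymmetry of $\we$ give $m\we n=(\text{integer})\,(b_1\we b_2)$ for all $m,n\in\M_h$; hence $|b_1\we b_2|=[\Z^2:\Gamma]$ divides each generator of the gcd, so the index divides $D$. On the other hand, since $b_1,b_2\in\Gamma$ are integer combinations of the elements of $\M_h$, expanding $b_1\we b_2$ by bilinearity writes it as an integer combination of the numbers $m\we n$, each divisible by $D$; hence $D$ divides $b_1\we b_2=\pm[\Z^2:\Gamma]$. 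The two divisibilities, between nonnegative integers, force $D=[\Z^2:\Gamma]$, and the lemma follows.

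I do not expect a genuine obstacle here: the only points requiring care are the convention in the degenerate case and the invocation of the identity ``determinant of a basis $=$ index'', both entirely standard. Equivalently, one may run the same computation through the Smith normal form of the $2\times\#\M_h$ integer matrix whose columns are the elements of $\M_h$: the gcd of its $2\times 2$ minors equals the product $d_1d_2$ of its two invariant factors, and this product is precisely the index, recovering $D=[\Z^2:\Gamma]$ and hence the claimed criterion.
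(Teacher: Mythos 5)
Your proof is correct and complete. Note that the paper does not prove this lemma at all: it is quoted as Lemma~1 of the cited work of Agrachev--Sarychev, so there is no in-paper argument to compare against. Your identification of $\gcd\{m\we n \mid m,n\in\M_h\}$ with the index $[\Z^2:\Gamma]$ (via the two divisibility relations, or equivalently via the Smith normal form) is a clean, self-contained justification that in fact proves slightly more than the stated equivalence, and your handling of the degenerate collinear case is the right way to make the ``if and only if'' unconditional.
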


\subsection{Tri-dimensional torus}

On $\T^3$, we use the Fourier decomposition of a divergence free vector field, 
$$f=\sum_{m \in \M_f}p_me^{i\la m,\cdot \ra}=\sum_{m\in \M_f}a_m\cos\la m,\cdot \ra +b_m\sin\la m, \cdot \ra,$$ 
where $p_m$ are linear combinations of $\p_x,\p_y,\p_z$. We identify the constant vector fields $p_m$ with vectors in $\C^3$, whose coordinates correspond to the coefficients in $\p_x,\p_y,\p_z$. In particular $p_m=\frac{a_m-ib_m}{2}$. The components of the vector $\mathfrak{Re}(p_m)$ (respectively $\mathfrak{Im}(p_m)$) correspond to the real parts (respectively to the imaginary parts) of the components of $p_m$. With these notations, and because $\mathrm{div} f=0$, $\la m,p_m\ra =\la m,a_m\ra=\la m,b_m \ra=0$ for every $m \in \M_f$. The set of directions that are orthogonal to $m\in \Z^3$ is denoted by $m^{\perp}:=\left\{v \in \R^3\mid \la m,v \ra =0\right\}$. 
For two vectors $a,b\in \R^3$, their cross product is denoted by $a\we b$. The subgroup of $\Z^3$ generated by $\M_f$ is denoted by $\Gamma$. The aim of this section is to characterize the Lie algebra $\mathrm{Lie}\left\{f+u \mid u \in \R^3\right\}$. We will use the notation $\mathfrak{L}_f=\mathrm{Lie}\left\{f+u \mid u \in \R^3\right\}$. 
\\ \\
In the following, we will make use of the following formulas. For every $m \neq 0$,
\begin{align*}
    &\ad_{\p_x}a_m\cos\la m,\cdot \ra=-m_xa_m\sin \la m,\cdot \ra, \quad \ad_{\p_x}b_m\sin\la m,\cdot\ra=m_xb_m\cos \la m,\cdot \ra,\\
    &\ad_{\p_y}a_m\cos\la m,\cdot \ra=-m_ya_m\sin \la m,\cdot \ra,\quad \ad_{\p_y}b_m\sin\la m,\cdot\ra=m_yb_m\cos \la m,\cdot \ra,\\
    &\ad_{\p_z}a_m\cos\la m,\cdot \ra=-m_za_m\sin \la m,\cdot \ra, \quad \ad_{\p_z}b_m\sin\la m,\cdot\ra=m_zb_m\cos \la m,\cdot \ra. 
\end{align*}

\begin{lemma} \label{lemma canceled modes dim3}

Let $\# \M_f < \infty$. Then
    \begin{equation*}
        \mathfrak{L}_f=\mathrm{Lie}\left\{a_m\cos \la m,\cdot \ra + b_m\sin \la m,\cdot \ra,\p_x,\p_y,\p_z \mid m \in \M_f\right\}.
    \end{equation*}
\end{lemma}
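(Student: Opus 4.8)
The plan is to prove the two inclusions separately, the nontrivial one being that every individual Fourier mode of $f$ lies in $\mathfrak{L}_f$. Write $\mathcal G=\mathrm{Lie}\{a_m\cos\la m,\cdot\ra+b_m\sin\la m,\cdot\ra,\p_x,\p_y,\p_z\mid m\in\M_f\}$ for the right-hand side. The inclusion $\mathfrak{L}_f\subset\mathcal G$ is immediate: since $f=\sum_{m\in\M_f}(a_m\cos\la m,\cdot\ra+b_m\sin\la m,\cdot\ra)$, every generator $f+u$ of $\mathfrak{L}_f$ is a sum of generators of $\mathcal G$. For the reverse inclusion I first observe that all constant fields belong to $\mathfrak{L}_f$, because $(f+u)-(f+u')=u-u'$ ranges over $\R^3$; hence $\p_x,\p_y,\p_z\in\mathfrak{L}_f$ and $f=(f+u)-u\in\mathfrak{L}_f$. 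It then remains to show $a_{m_0}\cos\la m_0,\cdot\ra+b_{m_0}\sin\la m_0,\cdot\ra\in\mathfrak{L}_f$ for each $m_0\in\M_f$.

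The key mechanism is that $\ad_{\p_x},\ad_{\p_y},\ad_{\p_z}$ are three commuting derivations that preserve $\mathfrak{L}_f$ (since $\p_x,\p_y,\p_z\in\mathfrak{L}_f$) and that simultaneously diagonalize the complex Fourier modes of $f$. Indeed, a direct computation gives
$$[\p_{x_k},p_me^{i\la m,\cdot\ra}]=im_k\,p_me^{i\la m,\cdot\ra},\qquad k=1,2,3,$$
so each complex mode $p_me^{i\la m,\cdot\ra}$ is a joint eigenvector of $(\ad_{\p_x},\ad_{\p_y},\ad_{\p_z})$ with eigenvalue $(im_x,im_y,im_z)$, and these joint eigenvalues are pairwise distinct for distinct $m\in\Z^3$.

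To extract a single mode I pass to the complexification $\mathfrak{L}_f\otimes_\R\C$, which contains $f$ and is closed under complex polynomials in $\ad_{\p_x},\ad_{\p_y},\ad_{\p_z}$. Because $\M_f$ is finite and the eigenvalue tuples $\{(im_x,im_y,im_z)\mid m\in\M_f\}$ are distinct, Lagrange interpolation (concretely, a product of linear forms) furnishes a polynomial $Q$ in three variables with $Q(im_x,im_y,im_z)=\delta_{m,m_0}$ for all $m\in\M_f$. Applying the corresponding operator to $f$ yields
$$Q(\ad_{\p_x},\ad_{\p_y},\ad_{\p_z})f=\sum_{m\in\M_f}Q(im_x,im_y,im_z)\,p_me^{i\la m,\cdot\ra}=p_{m_0}e^{i\la m_0,\cdot\ra}\in\mathfrak{L}_f\otimes_\R\C.$$
Since $-m_0\in\M_f$ as well, the same construction (or simply complex conjugation, using $p_{-m_0}=\ov{p_{m_0}}$) gives $p_{-m_0}e^{-i\la m_0,\cdot\ra}\in\mathfrak{L}_f\otimes_\R\C$, and their sum equals the real field $a_{m_0}\cos\la m_0,\cdot\ra+b_{m_0}\sin\la m_0,\cdot\ra=2\mathfrak{Re}(p_{m_0}e^{i\la m_0,\cdot\ra})$. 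A real vector field lying in $\mathfrak{L}_f\otimes_\R\C=\mathfrak{L}_f\oplus i\mathfrak{L}_f$ necessarily lies in $\mathfrak{L}_f$, which gives the claim and hence $\mathcal G\subset\mathfrak{L}_f$.

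The main obstacle --- and the place where the finiteness hypothesis $\#\M_f<\infty$ is indispensable --- is the simultaneous diagonalization/interpolation step: only with finitely many modes can one build a single operator that annihilates all modes but one. The remaining subtlety is purely bookkeeping: one must extract the conjugate pair $\{m_0,-m_0\}$ together and check that the resulting real field descends from the complexification back to $\mathfrak{L}_f$. This argument streamlines the two-dimensional computation of \cref{lemma canceled modes dim2}, where the analogous separation was achieved by grouping modes according to $(|m_x|,|m_y|)$ and then disentangling the remaining signs with the mixed bracket $\ad_{\p_x}\ad_{\p_y}$.
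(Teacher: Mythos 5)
Your proof is correct, and it rests on the same mechanism as the paper's: the three commuting derivations $\ad_{\p_x},\ad_{\p_y},\ad_{\p_z}$ preserve $\mathfrak{L}_f$ and act diagonally on the Fourier modes, so a suitable polynomial in them isolates a single mode --- this is exactly where both arguments use $\#\M_f<\infty$. The difference is in execution. The paper stays over the reals, where $\ad_{\p_{x_k}}$ swaps cosines and sines, so it can only use the \emph{squared} operators $\ad_{\p_{x_k}}^2$ (eigenvalue $-m_k^2$) in the first stage; this separates modes only up to the componentwise absolute values $(|m_x|,|m_y|,|m_z|)$, leaving a packet of up to four conjugate pairs that must then be disentangled by a second round of first-order operators $(m_{0x}+\ad_{\p_x})$, etc. You instead pass to the complexification $\mathfrak{L}_f\oplus i\mathfrak{L}_f$, where each complex mode $p_me^{i\la m,\cdot\ra}$ is a genuine joint eigenvector with eigenvalue $(im_x,im_y,im_z)$, and a single Lagrange-type product of linear forms kills everything except $p_{m_0}e^{i\la m_0,\cdot\ra}$; the conjugate pair is then recovered by adding the conjugate, and the descent from $\mathfrak{L}_f\otimes_\R\C$ back to $\mathfrak{L}_f$ is justified correctly by the uniqueness of real and imaginary parts. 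Your version is cleaner and sidesteps the sign-peeling case analysis (the paper's ``other cases can be easily derived'' for modes with a vanishing component disappears entirely), at the mild cost of introducing the complexified Lie algebra; the paper's real computation, while longer, keeps every intermediate object a genuine real vector field in $\mathfrak{L}_f$. Both are complete proofs.
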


\begin{proof}
     As for the bi-dimensional case, we explain how the isolated frequencies also belong to the Lie algebra. Indeed, let $m_0\in \M_f$, let us prove that 
     \begin{equation*}
         a_{m_0}\cos \la m_0,\cdot \ra +b_{m_0}\sin \la m_0,\cdot \ra \in \mathfrak{L}_f.
     \end{equation*}
     By a straightforward computation,
    \begin{equation*}
        \ad_{\p_x}^2 f=-\sum_{m\in \M_f}m_x^2p_me^{i\la m,\cdot \ra}\in \mathfrak{L}_f,
    \end{equation*}
    and so for every $\alpha \in \R$,
    \begin{equation*}
        \alpha f - \ad_{\p_x}^2 f =\sum_{m \in \M_f}(\alpha-m_x^2)p_me^{i\la m,\cdot \ra}\in \mathfrak{L}_f.
    \end{equation*}
    If there exists $m_1 \in \M_f$ such that $|m_{1x}|\neq |m_{0x}|$, then 
    \begin{equation*}
        (m_{1x}^2-\ad_{\p_x}^2)f=\sum_{m \in \M_f}(m_{1x}^2-m_x^2)p_me^{i \la m,\cdot \ra}\in \mathfrak{L}_f.
    \end{equation*}
    By iteration of such operation for every $m_1 \in \M_f$ that verifies $|m_{1x}|\neq |m_{0x}|$, we obtain that
    \begin{equation*}
        \prod_{m_1\in \M_f \atop |m_{1x}|\neq |m_{0x}|}(m_{1x}^2-\ad_{\p_x}^2)f= \beta\sum_{m\in \M_f\atop|m_x|=|m_{0x}|}p_me^{i \la m,\cdot \ra}\in \mathfrak{L}_f,
    \end{equation*}
    where 
    \begin{equation*}
        \beta = \prod_{m_1\in \M_f\atop |m_{1x}|\neq |m_{0x}|}(m_{1x}^2-m_{0x}^2) \neq 0.
    \end{equation*}
    For any $m\in \Z^3$, $|m|=|m_0|$ means that $|m_x|=|m_{0x}|,|m_y|=|m_{0y}|$ and $|m_z|=|m_{0z}|$. By iteration and thanks to an adapted choice of $\alpha,\beta$, we obtain that
    \begin{equation*}
    \prod_{m_1,m_2,m_3\in \M_f \atop |m_{1x}|\neq |m_{0x}|,|m_{2y}|\neq |m_{0y}|,|m_{3z}|\neq |m_{0z}|}(m_{1x}^2-\ad_{\p_x}^2)(m_{2y}^2-\ad_{\p_y}^2)(m_{3z}^2-\ad_{\p_z}^2)f= \gamma \sum_{m\in \M_f\atop |m|=|m_0|}p_me^{i\la m,\cdot \ra}\in \mathfrak{L}_f,
    \end{equation*}
    where 
    \begin{equation*}
        \gamma =\prod_{m_1,m_2,m_3\in \M_f \atop |m_{1x}|\neq |m_{0x}|,|m_{2y}|\neq |m_{0y}|,|m_{3z}|\neq |m_{0z}|}(m_{1x}^2-m_{0x}^2)(m_{2y}^2-m_{0y}^2)(m_{3z}^2-m_{0z}^2)\neq 0,
    \end{equation*}
    and so 
    \begin{equation*}
        \sum_{|m|=|m_0|}p_me^{i\la m,\cdot \ra}\in \mathfrak{L}_f.
    \end{equation*} Let us consider the case where $m_{0x},m_{0y},m_{0z}\neq 0$. There are $2^{3}=8$ modes $m\in \Z^3$ that verify $|m|=|m_0|$. The vector field $f$ is real-valued, so $\ov{p_m}=p_{-m}$ for every $m\in \M_f$. There are $4$ couples of opposite modes $m\in\Z^3$ that verify $|m|=|m_0|$, so 
    \begin{equation*}
        \sum_{|m|=|m_0|}p_me^{i\la m,\cdot \ra}=2(\mathfrak{Re}(p_{m_0}e^{i\la m_0,\cdot\ra})+\sum_{k=1}^3\mathfrak{Re}(p_{m_{0,k}}e^{i\la m_{0,k},\cdot\ra}))\in \mathfrak{L}_f,
    \end{equation*}
    where 
    \begin{equation*}
       m_{0,1}=(m_{0x},m_{0y},-m_{0z}),\quad m_{0,2}=(m_{0x},-m_{0y},m_{0z}), \quad m_{0,3}=(-m_{0x},m_{0y},m_{0z}).
    \end{equation*}
     Then  
    \begin{equation*}
         \frac{1}{m_{0x}}(m_{0x}+\ad_{\p_x})\sum_{|m|=|m_0|}p_me^{i\la m,\cdot \ra}=4(\mathfrak{Re}(p_{m_0}e^{i\la m_0,\cdot\ra})+\sum_{k=1}^2\mathfrak{Re}(p_{m_{0,k}}e^{i\la m_{0,k},\cdot \ra}))\in \mathfrak{L}_f.
    \end{equation*}
    Then we can apply $\frac{1}{m_{0y}}(m_{0y}+\ad_{\p_y})$ and $\frac{1}{m_{0z}}(m_{0z}+\ad_{\p_z})$ to the previous vector field and we obtain that
    \begin{equation*}
        \mathfrak{Re}(p_{m_0} e^{i\la {m_0},\cdot \ra})=a_{m_0}\cos\la {m_0},\cdot \ra +b_{m_0}\sin \la {m_0},\cdot \ra\in \mathfrak{L}_f.
    \end{equation*}
    The other cases can be easily derived from the previous one.
\end{proof}

The following formulas can be obtained by straightforward computations and will be useful for the remaining proofs. 

\begin{prop} \label{formulas T3}
\begin{align*}
    1)\quad &[p_m\sin\la m,\cdot \ra,p_n\cos \la n,\cdot \ra]=\la m,p_n\ra p_m\cos \la m,\cdot \ra \cos \la n,\cdot\ra +\la n,p_m\ra p_n\sin \la m,\cdot \ra \sin \la n,\cdot \ra,  \\
    &[p_m\cos \la m,\cdot \ra,p_n \sin \la m,\cdot \ra]=-\la m,p_n\ra p_m\sin \la m,\cdot \ra \sin \la n,\cdot\ra -\la n,p_m\ra p_n\cos \la m,\cdot \ra \cos \la n,\cdot \ra, \\
    &[p_m\cos \la m,\cdot \ra,p_n \cos \la m,\cdot \ra]=-\la m,p_n\ra p_m\sin \la m,\cdot \ra \cos \la n,\cdot\ra +\la n,p_m\ra p_n\cos \la m,\cdot \ra \sin \la n,\cdot \ra, \\
    &[p_m\sin \la m,\cdot \ra,p_n \sin \la m,\cdot \ra]=-\la m,p_n\ra p_m\cos \la m,\cdot \ra \sin \la n,\cdot\ra -\la n,p_m\ra p_n\sin \la m,\cdot \ra \cos \la n,\cdot \ra. \\ \\
    2)\quad &[p_m\sin \la m,\cdot \ra,p_n\sin\la n,\cdot\ra ]-[p_m\cos \la m,\cdot \ra,p_n\cos \la n,\cdot \ra ]
    =(\la m,p_n\ra p_m - \la n,p_m\ra p_n)\sin \la m+n,\cdot \ra,\\
    &[p_m\cos \la m,\cdot \ra,p_n\sin\la n,\cdot\ra ]+[p_m\sin \la m,\cdot \ra,p_n\cos \la n,\cdot \ra ]
    =(\la m,p_n\ra p_m - \la n,p_m\ra p_n)\cos \la m+n,\cdot \ra.\\ \\
    3) \quad &[a_m\cos\la m,\cdot \ra+b_m\sin \la m,\cdot \ra,c_n\cos \la n,\cdot \ra+d_n \sin \la n,\cdot \ra]\\
        -&[-a_m\sin\la m,\cdot \ra+b_m\cos \la m,\cdot \ra,-c_n\sin \la n,\cdot \ra+d_n \cos \la n,\cdot \ra]\\
        &=(\la m,c_n\ra b_m+\la m,d_n\ra a_m- \la n,b_m \ra c_n - \la n,a_m \ra d_n)\cos \la m+n,\cdot \ra \\
        +&(\la m,d_n\ra b_m-\la m,c_n \ra a_m -\la n,b_m \ra d_n+\la n,a_m \ra c_n)\sin \la m+n,\cdot \ra.
    \end{align*}
\end{prop}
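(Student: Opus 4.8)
The plan is to reduce all three groups of identities to a single bilinear bracket computation for ``modulated constant'' fields and then expand everything through Euler's formulas. For constant vectors $v,w\in\C^3$ and wave vectors $m,n\in\Z^3$, the coordinate expression of the Lie bracket together with $\p_j e^{i\la m,\cdot\ra}=im_je^{i\la m,\cdot\ra}$ gives at once the master identity
\begin{equation*}
    [v\,e^{i\la m,\cdot\ra},\,w\,e^{i\la n,\cdot\ra}]=i\bigl(\la m,w\ra\,v-\la n,v\ra\,w\bigr)e^{i\la m+n,\cdot\ra}.
\end{equation*}
Here I fix the bracket sign so that it is consistent with the chronological calculus used in the paper, in which $P^*a=a\circ P$; this convention reverses the sign of the naive operator commutator $vw-wv$, and I would pin it down once and keep it fixed. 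The decisive structural feature is that the right-hand side is supported on the single mode $m+n$; once this is in hand, parts 1)--3) are pure bookkeeping.

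For part 1) I would substitute $\cos\la m,\cdot\ra=\tfrac12(e^{i\la m,\cdot\ra}+e^{-i\la m,\cdot\ra})$ and $\sin\la m,\cdot\ra=\tfrac1{2i}(e^{i\la m,\cdot\ra}-e^{-i\la m,\cdot\ra})$ and apply the master identity to each of the four resulting exponential brackets, which live on the modes $m\pm n$. Recombining the exponentials into products of trigonometric functions collects each bracket into the two terms carrying $\la m,p_n\ra\,p_m$ and $\la n,p_m\ra\,p_n$, with the signs dictated by the derivatives of the chosen $\cos/\sin$; equivalently one may run the coordinate computation directly using $\p_j\cos\la m,\cdot\ra=-m_j\sin\la m,\cdot\ra$ and $\p_j\sin\la m,\cdot\ra=m_j\cos\la m,\cdot\ra$. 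Note that divergence-freeness plays no role here: only the cross inner products $\la m,p_n\ra$ and $\la n,p_m\ra$ enter.

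Parts 2) and 3) are the ``collapsing'' identities, where the prescribed linear combinations are engineered so that the difference modes $m-n$ cancel and only the sum mode $m+n$ survives. Part 2) is immediate from part 1) and the product-to-sum identities $\cos A\cos B-\sin A\sin B=\cos(A+B)$ and $\sin A\cos B+\cos A\sin B=\sin(A+B)$. For part 3) the cleanest route is complexification: setting $q_m=\tfrac12(a_m-ib_m)$ one has $a_m\cos\la m,\cdot\ra+b_m\sin\la m,\cdot\ra=q_me^{i\la m,\cdot\ra}+\ov{q_m}e^{-i\la m,\cdot\ra}$, while the phase-shifted field $-a_m\sin\la m,\cdot\ra+b_m\cos\la m,\cdot\ra$ corresponds to the amplitude $iq_m$. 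Writing $A=q_me^{i\la m,\cdot\ra}$ and $B=q_ne^{i\la n,\cdot\ra}$, bilinearity of the bracket and $\ov{[A,B]}=[\ov A,\ov B]$ give
\begin{equation*}
    [A+\ov A,\,B+\ov B]-[iA-i\ov A,\,iB-i\ov B]=2[A,B]+2[\ov A,\ov B]=4\,\mathfrak{Re}\,[A,B],
\end{equation*}
so that the difference modes drop out. By the master identity $[A,B]=iD\,e^{i\la m+n,\cdot\ra}$ with $D=\la m,q_n\ra q_m-\la n,q_m\ra q_n$, hence $4\,\mathfrak{Re}\,[A,B]=-4\,\mathfrak{Im}(D)\cos\la m+n,\cdot\ra-4\,\mathfrak{Re}(D)\sin\la m+n,\cdot\ra$; substituting $q_m=\tfrac12(a_m-ib_m)$ and $q_n=\tfrac12(c_n-id_n)$ and separating real and imaginary parts reproduces exactly the stated $\cos\la m+n,\cdot\ra$ and $\sin\la m+n,\cdot\ra$ coefficients.

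As the statement itself indicates, there is no conceptual obstacle here. The only point demanding care is sign bookkeeping: fixing the bracket convention consistently so that the $m\pm n$ contributions carry the right signs, and then tracking the four trigonometric cases and the Euler expansions without error. Once the master identity and its sign are settled, every line of parts 1)--3) follows by substitution and elementary trigonometric recombination.
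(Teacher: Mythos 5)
Your proposal is correct. The paper gives no proof of this proposition beyond asserting that the formulas follow from straightforward computations, and your complex-exponential bookkeeping is a clean, complete way to carry them out: the master identity $[v\,e^{i\la m,\cdot\ra},w\,e^{i\la n,\cdot\ra}]=i(\la m,w\ra v-\la n,v\ra w)e^{i\la m+n,\cdot\ra}$ is the correct bracket under the convention the paper inherits from chronological calculus (componentwise $[X,Y]=Y(X)-X(Y)$, as one checks against the first line of part 1)), and your treatment of part 3) via $[A+\ov A,B+\ov B]-[iA-i\ov A,iB-i\ov B]=4\,\mathfrak{Re}\,[A,B]$ reproduces the stated $\cos\la m+n,\cdot\ra$ and $\sin\la m+n,\cdot\ra$ coefficients exactly. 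One point your method surfaces: the fourth line of part 1) as printed should have $+\la m,p_n\ra p_m\cos\la m,\cdot\ra\sin\la n,\cdot\ra$ as its first term (this is the version consistent with the first formula of part 2), and it is what your master identity yields), so the minus sign there is a typo in the statement rather than a defect of your argument.
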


\begin{theorem} \label{theorem : appendix T3 span1}
     Let $\# \M_f < \infty$. If $\mathrm{span}\M_f$ is of dimension 1 or if ($\mathrm{span}\M_f$ is of dimension 2 and $\mathrm{span}\left\{a_m,b_m \mid m \in \M_f\right\}$ is of dimension 1), then 
        \begin{equation*}
            \mathrm{Lie}\left\{f+u \mid u \in \R^3\right\}=\mathrm{span}\left\{(\alpha a_m + \beta b_m)\cos \la m,\cdot \ra + (-\beta a_m + \alpha b_m)\sin \la m,\cdot \ra,\p_x,\p_y,\p_z \mid m\in \M_f, \alpha,\beta \in \R\right\},
        \end{equation*}
\end{theorem}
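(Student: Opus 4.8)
The plan is to prove the two inclusions separately, the decisive point being that the right-hand side is already a Lie subalgebra of $\mathrm{Vec}_0\T^3$ and therefore cannot grow under bracketing. Write $W_1^m=a_m\cos\la m,\cdot\ra+b_m\sin\la m,\cdot\ra$ and $W_2^m=b_m\cos\la m,\cdot\ra-a_m\sin\la m,\cdot\ra$, so that $(\alpha a_m+\beta b_m)\cos\la m,\cdot\ra+(-\beta a_m+\alpha b_m)\sin\la m,\cdot\ra=\alpha W_1^m+\beta W_2^m$ and the right-hand side equals $\mathrm{span}\{W_1^m,W_2^m,\p_x,\p_y,\p_z\mid m\in\M_f\}$. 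For the inclusion into $\mathfrak{L}_f$, I would first note that $\p_x,\p_y,\p_z\in\mathfrak{L}_f$ as linear combinations of the generators $f+u$; then \cref{lemma canceled modes dim3} gives $W_1^m\in\mathfrak{L}_f$ for each $m\in\M_f$; and since $m\ne 0$ one may pick a coordinate $i$ with $m_i\ne 0$ and use $\ad_{\p_i}W_1^m=m_iW_2^m$ to conclude $W_2^m\in\mathfrak{L}_f$. Thus the whole right-hand side sits inside $\mathfrak{L}_f$.

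The heart of the argument is the geometric observation that in both listed cases one has $\la m,p_n\ra=\la n,p_m\ra=0$ for all $m,n\in\M_f$. If $\mathrm{span}\M_f$ has dimension $1$, all frequencies are collinear, so from $p_n\perp n$ one gets $p_n\perp m$ as well. If $\mathrm{span}\M_f$ has dimension $2$ while $\mathrm{span}\{a_m,b_m\mid m\in\M_f\}$ has dimension $1$, let $v$ span this amplitude line; since for each $m\in\M_f$ at least one of $a_m,b_m$ is nonzero and orthogonal to $m$ while parallel to $v$, we obtain $v\perp m$ for every $m\in\M_f$, hence $v\perp\mathrm{span}\M_f$. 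As each $p_n$ is parallel to $v$, the two inner products again vanish.

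With this in hand, \cref{formulas T3} finishes the proof. Every term in the bracket formulas of \cref{formulas T3} carries a factor $\la m,p_n\ra$ or $\la n,p_m\ra$, so each bracket $[W_i^m,W_j^n]$ between two mode fields vanishes identically. The remaining brackets also stay inside the span, since $\ad_{\p_i}W_1^m=m_iW_2^m$, $\ad_{\p_i}W_2^m=-m_iW_1^m$, and $[\p_i,\p_j]=0$. Hence the right-hand side is a Lie subalgebra; as it contains every generator, namely $f+u=\sum_{m\in\M_f}W_1^m+u$ with $u\in\mathrm{span}\{\p_x,\p_y,\p_z\}$, it contains $\mathfrak{L}_f$. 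Together with the first inclusion this yields the claimed equality.

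The one genuinely non-routine step is the geometric reduction of the second paragraph, showing that each of the two hypotheses forces the universal orthogonality $\la m,p_n\ra=0$. Once that is established, \cref{formulas T3} makes the vanishing of all mode--mode brackets automatic, and the rest is bookkeeping; in particular the Lie algebra never produces any frequency outside $\M_f$, which is exactly what distinguishes these degenerate cases from the generating case treated elsewhere.
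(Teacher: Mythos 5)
Your proposal is correct and follows essentially the same route as the paper: isolate each mode via \cref{lemma canceled modes dim3}, obtain the rotated companion by applying $\ad_{\p_i}$ for a coordinate with $m_i\neq 0$, and then use the orthogonality $\la m,p_n\ra=\la n,p_m\ra=0$ (which holds in both degenerate cases) together with \cref{formulas T3} to see that all mode--mode brackets vanish, so the span is already a Lie subalgebra. Your derivation of the orthogonality in the second case (showing the amplitude line is orthogonal to all of $\mathrm{span}\,\M_f$) is phrased slightly differently from the paper's identification of the amplitude line with $\mathrm{span}\{m\we n\}$, but the two arguments are equivalent.
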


\begin{proof} Let $m\in \M_f$ be such that $m\neq 0$. According to \cref{lemma canceled modes dim3}, $a_m\cos \la m,\cdot \ra+b_m\sin\la m,\cdot \ra \in \mathfrak{L}_f$. If $m_x \neq 0$, 
    \begin{equation*}
        \ad_{\p_x}(a_m\cos \la m,\cdot \ra+b_m\sin\la m,\cdot \ra)=\underbrace{m_x}_{\neq0}(-a_m\sin\la m,\cdot \ra+b_m\cos \la m,\cdot \ra) \in \mathfrak{L}_f.
    \end{equation*}
    Else we compute $\ad_{\p_y}$ or $\ad_{\p_z}$, and because $m\neq0$, 
    \begin{equation*}
        -a_m\sin\la m,\cdot \ra+b_m\cos \la m,\cdot \ra \in \mathfrak{L}_f,
    \end{equation*}
    so for every $\alpha,\beta \in \R$,
    \begin{equation*}
        (\alpha a_m+\beta b_m)\cos \la m,\cdot \ra+(\alpha b_m - \beta a_m)\sin \la m,\cdot \ra \in \mathfrak{L}_f,
    \end{equation*}
    and 
    \begin{align*}
        &\mathrm{Lie} \left\{a_m\cos \la m,\cdot \ra+b_m\sin \la m,\cdot \ra,\p_x,\p_y,\p_z\right\} \\
        =&\mathrm{span} \left\{(\alpha a_m+\beta b_m)\cos \la m,\cdot \ra+(\alpha b_m -\beta a_m)\sin \la m,\cdot \ra,\p_x,\p_y,\p_z \mid \alpha,\beta \in \R \right\}.
    \end{align*}
    Let $n\in \M_f$ be another mode of $f$, then $a_n\cos \la n,\cdot \ra + b_n \cos \la n,\cdot \ra \in \mathfrak{L}_f$ according to \cref{lemma canceled modes dim3}. If $\mathrm{span}\M_f$ is of dimension 1, then $m\we n=0$ and $m^{\perp}=n^{\perp}$. If $\mathrm{span}\M_f$ is of dimension 2 and if $m\we n\neq0$, then $\mathrm{span}\M_f=\mathrm{span} \left\{m,n\right\}$. But $\mathrm{span}\left\{a_m,b_m \mid m\in \M_f\right\}$ is of dimension 1 and $\la k,a_k\ra=0$ for every $k\in \M_f$, so necessarily $\mathrm{span}\left\{a_m,b_m\mid m \in \M_f\right\}=\mathrm{span} \left\{m\we n\right\}$. In each case, $a_m,b_m,a_n,b_n \in \mathrm{span} \left\{m\we n\right\}$, and
    \begin{align*}
        \la m,a_n\ra=\la n,a_m\ra=\la m,b_n\ra=\la n,b_m\ra=0.
    \end{align*}
    Applying the third formula of \cref{formulas T3},
    \begin{equation*}
        [a_m\cos\la m,\cdot \ra +b_m\sin \la m,\cdot \ra,a_n\cos\la n,\cdot\ra +b_n \sin \la n,\cdot \ra]=0,
    \end{equation*}
    and 
    \begin{equation*}
        \mathfrak{L}_f=\mathrm{span} \left\{(\alpha a_m+\beta b_m)\cos \la m,\cdot \ra+(\alpha b_m-\beta a_m)\sin \la m,\cdot \ra,\p_x,\p_y,\p_z \mid m\in \M_f, \alpha,\beta \in \R\right\}. 
    \end{equation*}
\end{proof}

    \begin{rem} \label{T3 rem equivalences}
    If $m\in \M_f$, then 
    \begin{equation*}
        \forall p_m \in m^{\perp}, p_m \cos \la m,\cdot \ra \in \mathfrak{L}_f \iff \forall p_m \in m^{\perp}, p_m\sin \la m,\cdot \ra \in \mathfrak{L}_f,
    \end{equation*}
    and 
    \begin{equation*}
        (\forall p_m,q_m\in m^{\perp}, \quad p_m\cos\la m,\cdot \ra +q_m \sin \la m,\cdot \ra\in \mathfrak{L}_f) \iff(\forall p_m\in m^{\perp}, \quad p_m\cos \la m,\cdot \ra\in \mathfrak{L}_f  ). 
    \end{equation*}
\end{rem}

\begin{theorem} \label{theorem : algebra dim3 span3}
    Let $\# \M_f < \infty$. If $\mathrm{span}\M_f=\R^3$ and $\mathrm{span}\left\{a_m,b_m \mid m \in \M_f\right\}=\R^3$,
    then  
    \begin{equation*}
        \mathrm{Lie}\left\{f+u \mid u \in \R^3\right\}=\mathrm{span} \left\{p_m\cos\la m,\cdot \ra,q_m\sin\la m,\cdot\ra,\p_x,\p_y,\p_z \mid m\in \Gamma,p_m,q_m \in m^{\perp}\right\}.
    \end{equation*}
\end{theorem}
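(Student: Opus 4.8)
The plan is to prove the two inclusions separately. The inclusion ``$\subseteq$'' is the routine one: the right-hand side is precisely $\mathrm{Vec}_0(\T^3)_\Gamma$, which was already observed to be a \emph{closed} Lie subalgebra of $\mathrm{Vec}\T^3$, and since $f$ is divergence free with $\M_f\subset\Gamma$ while each $u\in\R^3$ is the $0$-mode (with $0^{\perp}=\R^3$), every generator $f+u$ lies in it; hence $\mathfrak{L}_f$ does too. The whole content is therefore the reverse inclusion, and for it it suffices to produce $\p_x,\p_y,\p_z\in\mathfrak{L}_f$ (these are the constant generators $u$) together with $p\cos\la m,\cdot\ra\in\mathfrak{L}_f$ for every $m\in\Gamma$ and every $p\in m^{\perp}$; by \cref{T3 rem equivalences} this is equivalent to controlling the full sine directions and the full space $m^{\perp}$ at $m$. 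I will say that a mode $m$ is \emph{full} when all of $m^{\perp}$ is achieved.

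By \cref{lemma canceled modes dim3}, together with the $\ad_{\p_i}$ computation opening the proof of \cref{theorem : appendix T3 span1}, for each $m\in\M_f$ the fields $(\alpha a_m+\beta b_m)\cos\la m,\cdot\ra+(\alpha b_m-\beta a_m)\sin\la m,\cdot\ra$ lie in $\mathfrak{L}_f$ for all $\alpha,\beta\in\R$. Thus at each seed mode we control at least the line $\mathrm{span}\{a_m,b_m\}\subset m^{\perp}$, and the full plane $m^{\perp}$ precisely when $a_m,b_m$ are independent. To create new modes I would invoke the second (or third) formula of \cref{formulas T3}: when $m\we n\neq 0$ and $p_m\in m^{\perp}$, $p_n\in n^{\perp}$ are already achieved, an explicit bracket yields $(\la m,p_n\ra p_m-\la n,p_m\ra p_n)\sin\la m+n,\cdot\ra\in\mathfrak{L}_f$, and one checks directly that $\la m,p_n\ra p_m-\la n,p_m\ra p_n\in(m+n)^{\perp}$, so this is a legitimate direction at the new mode $m+n$ (the cosine version following from one application of $\ad_{\p_i}$). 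Note also that $-m$ is achieved with the same directions as $m$, since $\cos$ is even and $\sin$ is odd, so subtractions $m-n=m+(-n)$ are covered, and the generated modes form the subgroup $\Gamma$.

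The geometric heart I would isolate as a lemma. Writing $p_m=\alpha(m\we n)+\beta m'$ and $p_n=\gamma(m\we n)+\delta n'$ with $m'=m\we(m\we n)$ and $n'=n\we(m\we n)$, a short computation using $\la m,n'\ra=\det(m,n,m\we n)=|m\we n|^2=-\la n,m'\ra$ gives the resulting coefficient $|m\we n|^2\bigl[(\delta\alpha+\beta\gamma)(m\we n)+\beta\delta\,(m'+n')\bigr]$. Since $\{m\we n,\ m'+n'\}$ is a basis of the plane $(m+n)^{\perp}$ (the first vector is normal to $\mathrm{span}\{m,n\}$, the second lies in it and is nonzero as $m\not\parallel n$), letting $\alpha,\beta,\gamma,\delta$ vary shows that the linear span of these outputs is \emph{all} of $(m+n)^{\perp}$; hence a full $m$ and a full $n$ with $m\we n\neq0$ propagate to a full $m+n$. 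A sharper version, obtained by fixing $p_m$ and varying only $p_n$, shows that a full $n$ together with a \emph{single} achieved direction $p_m\not\parallel m\we n$ at $m$ (with $m\we n\neq0$) already makes $m+n$ full, since the map $(\gamma,\delta)\mapsto\bigl(\delta\alpha+\beta\gamma,\beta\delta\bigr)$ is then invertible. This sharper form is the propagation engine.

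The main obstacle, which is genuinely new with respect to the two-dimensional case where $m^{\perp}$ is one-dimensional, is to \emph{seed} one full mode, because the hypothesis $\mathrm{span}\{a_m,b_m\mid m\in\M_f\}=\R^3$ is global and need not make $a_m,b_m$ span $m^{\perp}$ at any single $m$. If some seed already has independent $a_m,b_m$ we are done seeding. Otherwise every seed direction is a line, and $\mathrm{span}\M_f=\R^3$ provides three independent seeds $m_1,m_2,m_3$; I would then reach the single mode $K=m_1+m_2+m_3$ along the three routes $(m_1+m_2)+m_3$, $(m_1+m_3)+m_2$ and $(m_2+m_3)+m_1$, and argue, using that the seed direction lines together span $\R^3$, that at least two of the three directions delivered at $K$ by the bracket formula are linearly independent, making $K$ full. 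This is the one step where I expect the real casework, and I would discharge it by the explicit coefficient formula above. Once one full mode is available, the propagation engine spreads fullness to $\Gamma$ by the very same induction on the generating sets $\mathcal I_k$ used in \cref{theorem : algebra dim2}, with $\mathrm{span}\M_f=\R^3$ guaranteeing $\bigcup_k\mathcal I_k=\Gamma$; adjoining $\p_x,\p_y,\p_z$ then gives the inclusion ``$\supseteq$'' and completes the proof.
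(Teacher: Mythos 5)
Your overall architecture --- seed one ``full'' mode, then propagate fullness through $\Gamma$ by bracketing --- is the same as the paper's, and your explicit coefficient computation in the basis $\{m\we n,\,m',\,n'\}$ is correct and is a clean way to package the propagation step. But the seeding, which you rightly identify as the genuinely three-dimensional difficulty, is where the argument breaks. The claim ``if some seed already has independent $a_m,b_m$ we are done seeding'' is false: what \cref{lemma canceled modes dim3} and the operators $\ad_{\p_i}$ give you at a seed $m$ is only the two-parameter orbit $(\alpha a_m+\beta b_m)\cos\la m,\cdot\ra+(\alpha b_m-\beta a_m)\sin\la m,\cdot\ra$, which is a $2$-dimensional subspace of the $4$-dimensional space $\left\{p\cos\la m,\cdot\ra+q\sin\la m,\cdot\ra \mid p,q\in m^{\perp}\right\}$ and, when $a_m\we b_m\neq0$, contains no nonzero pure cosine or pure sine field at all (vanishing of the sine part forces $\alpha b_m=\beta a_m$, hence $\alpha=\beta=0$). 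Fullness at $m$ requires a second achieved pair $(a_m',b_m')$ outside this rotation orbit (\cref{adding modes : different angles}); the paper manufactures one by bracketing out to $m+n$ and back with $-n$ (Case 1 of its proof), and nothing in your proposal replaces that step. The same confusion affects your engine: formula 2) of \cref{formulas T3} needs $p_m\cos\la m,\cdot\ra$ and $p_m\sin\la m,\cdot\ra$ \emph{separately}, which you do not have at a seed with $a_m\we b_m\neq 0$; the paper must use the mixed formula 3) there.

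Second, in the all-seeds-are-lines branch you assert that among the directions delivered at $K=m_1+m_2+m_3$ by the several routes at least two are independent, and you explicitly defer the verification. That verification is the computational core of the paper's Case 2: it amounts to showing that $\la k,a_m\ra\la n,a_k\ra\la m,a_n\ra=-\la n,a_m\ra\la m,a_k\ra\la k,a_n\ra$ is incompatible with $\mathrm{span}\{a_m,a_n,a_k\}=\mathrm{span}\{m,n,k\}=\R^3$, via a determinant identity; it is not a routine consequence of your coefficient formula. Finally, the propagation from one full mode to all of $\Gamma$ is not ``the very same induction'' as in 2D: your sharper engine needs the single achieved direction at a seed $m$ to be non-parallel to $m\we K$, which can fail, and the paper spends all of \cref{generation of every modes dimV0=3 dim I0=3} on the resulting case analysis. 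So the skeleton is right, but of the two hard steps, the Case 1 seeding rests on a false claim and the Case 2 independence is left unproved.
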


Before going to the proof of \cref{theorem : algebra dim3 span3}, we need to introduce several propositions.

\begin{prop} \label{adding modes : different angles}
    Let $m\in \M_f$ be such that $m\neq 0$. If there exists $a_m,b_m,a_m',b_m'$ constant vector fields such that $a_m,a_m'\neq 0$,
    \begin{equation*}
        \left\{\begin{array}{cc}
           a_m\cos \la m,\cdot\ra+b_m \sin \la m,\cdot \ra \in \mathfrak{L}_f,    \\
        a_m'\cos\la m,\cdot \ra+b_m'\sin \la m,\cdot \ra\in \mathfrak{L}_f,
        \end{array}\right.
    \end{equation*}
    and such that $\begin{pmatrix}
            a'_m \\
            b'_m
        \end{pmatrix}\neq \begin{pmatrix}
            \alpha & \beta\\
            -\beta & \alpha 
        \end{pmatrix}\begin{pmatrix}
            a_m\\
            b_m
        \end{pmatrix}$ for every $\alpha,\beta\in \R$, then for every $p_m,q_m\in m^{\perp}$,
    \begin{equation*}
        p_m\cos \la m,\cdot \ra + q_m \sin \la m,\cdot \ra \in \mathfrak{L}_f.
    \end{equation*}
\end{prop}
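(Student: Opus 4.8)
The plan is to reformulate the statement in complex notation, where it becomes an elementary fact about the two-complex-dimensional space $m^{\perp}_{\C}$, the complexification of $m^{\perp}$. To each pair $p,q\in m^{\perp}$ I associate the real frequency-$m$ field $p\cos\la m,\cdot\ra+q\sin\la m,\cdot\ra=2\mathfrak{Re}(Pe^{i\la m,\cdot\ra})$ with $P=\frac{p-iq}{2}\in m^{\perp}_{\C}$, so that these fields are in bijection with the coefficients $P\in m^{\perp}_{\C}$. Under this identification the two hypotheses furnish the complex coefficients $p_m=\frac{a_m-ib_m}{2}$ and $p_m'=\frac{a_m'-ib_m'}{2}$, both nonzero since $a_m,a_m'\neq 0$. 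The first point to verify is that the matrix condition is exactly complex-linear independence: a direct computation shows that the pair $\begin{pmatrix}\alpha&\beta\\-\beta&\alpha\end{pmatrix}\begin{pmatrix}a_m\\b_m\end{pmatrix}$ corresponds to the complex coefficient $(\alpha+i\beta)p_m$, so the assumption $(a_m',b_m')\neq\begin{pmatrix}\alpha&\beta\\-\beta&\alpha\end{pmatrix}(a_m,b_m)$ for all $\alpha,\beta\in\R$ says precisely that $p_m'\neq z\,p_m$ for every $z\in\C$, that is, $p_m$ and $p_m'$ are linearly independent over $\C$.

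Next I would catalogue the operations available inside $\mathfrak{L}_f$. Taking differences of the generators $f+u$ shows that every constant field lies in $\mathfrak{L}_f$, hence $\ad_{\p_x},\ad_{\p_y},\ad_{\p_z}$ map $\mathfrak{L}_f$ into itself. On a frequency-$m$ field these operators act on the complex coefficient by multiplication by $im_x,im_y,im_z$ respectively, since $[\p_x,Pe^{i\la m,\cdot\ra}]=im_xP e^{i\la m,\cdot\ra}$ for constant $P$. As $m\neq 0$, one component is nonzero, say $m_x\neq 0$, and then $\frac{1}{m_x}\ad_{\p_x}$ realizes multiplication of the complex coefficient by $i$. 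Combining this with real linear combinations, $\alpha\,(\cdot)+\beta\,\big(\frac{1}{m_x}\ad_{\p_x}\big)(\cdot)$ multiplies the complex coefficient by an arbitrary $z=\alpha+i\beta\in\C$. Applying this to the generator with coefficient $p_m$ produces every real field whose complex coefficient lies on the line $\C\,p_m$, and likewise the second generator yields the whole line $\C\,p_m'$.

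I would then finish by a dimension count. Since $m^{\perp}_{\C}$ has complex dimension two and $p_m,p_m'$ are complex-linearly independent, they form a basis, so $\C\,p_m+\C\,p_m'=m^{\perp}_{\C}$; viewed as real subspaces this is a direct sum of two planes filling the four-real-dimensional space $m^{\perp}_{\C}$. Taking real linear combinations of the fields produced from the two generators therefore realizes every coefficient $P\in m^{\perp}_{\C}$, i.e. every field $p_m\cos\la m,\cdot\ra+q_m\sin\la m,\cdot\ra$ with $p_m,q_m\in m^{\perp}$ belongs to $\mathfrak{L}_f$, which is the claim. The only genuine care is the bookkeeping of the complex structure: once one checks that $\frac{1}{m_x}\ad_{\p_x}$ implements multiplication by $i$ and that the matrix hypothesis is complex independence, the argument reduces to the observation that two independent vectors span a two-dimensional space, so I expect no real obstacle beyond setting up these identifications correctly.
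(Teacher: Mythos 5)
Your proof is correct, and it takes a genuinely different and cleaner route than the paper's. The paper reduces the claim (via \cref{T3 rem equivalences}) to producing two non-collinear vectors of $m^{\perp}$, each appearing as the coefficient of a pure cosine or pure sine term in $\mathfrak{L}_f$, and then runs a three-way case analysis on whether $a_m\we b_m$ and $a_m'\we b_m'$ vanish, in each case choosing the rotation parameters $\alpha,\beta$ of \cref{theorem : appendix T3 span1} by hand. You instead complexify: you observe that the ``rotation'' $(a_m,b_m)\mapsto(\alpha a_m+\beta b_m,\,-\beta a_m+\alpha b_m)$ is exactly multiplication of the coefficient $p_m=\frac{a_m-ib_m}{2}$ by $\alpha+i\beta$, so the matrix hypothesis (together with $a_m\neq 0$) says precisely that $p_m$ and $p_m'$ are $\C$-linearly independent in the two-complex-dimensional space $m^{\perp}_{\C}$; since $\frac{1}{m_x}\ad_{\p_x}$ (or $\p_y,\p_z$ if $m_x=0$) implements multiplication by $i$ and real linear combinations are free, each hypothesis generates the full complex line through its coefficient, and two independent lines span $m^{\perp}_{\C}$. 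Your version eliminates the case analysis entirely and makes transparent why the matrix condition is the natural hypothesis; the paper's version stays in the real notation used throughout its $\T^3$ section and leans on \cref{theorem : appendix T3 span1} and \cref{T3 rem equivalences} as black boxes. The only bookkeeping point you leave implicit is that $a_m,b_m,a_m',b_m'$ do lie in $m^{\perp}$ (so that the ambient space really is two-dimensional over $\C$); this is automatic because every element of $\mathfrak{L}_f$ is divergence free, but it is worth a sentence.
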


\begin{proof}
    According to \cref{T3 rem equivalences}, this is sufficient to show that there exist two non-colinear vectors $p_m,q_m\in m^{\perp}$, such that  \begin{align*}
        (p_m\cos \la m,\cdot \ra\in \mathfrak{L}_f \quad \text{or} \quad p_m\sin \la m,\cdot \ra\in \mathfrak{L}_f), \\
        \text{and}\quad (q_m\cos \la m,\cdot \ra \in \mathfrak{L}_f \quad \text{or} \quad q_m\sin \la m,\cdot \ra \in \mathfrak{L}_f).
    \end{align*} 
    According to \cref{theorem : appendix T3 span1}, for every $\alpha,\beta\in \R$, 
    \begin{equation} \label{T3 prop1 rotation formulas}
    \left\{\begin{array}{ll}
         (\alpha a_m+\beta b_m)\cos \la m,\cdot \ra + (\alpha b_m -\beta a_m)\sin \la m,\cdot \ra \in \mathfrak{L}_f,  \\
         (\alpha' a_m'+\beta' b_m')\cos \la m,\cdot \ra + (\alpha' b_m' -\beta' a_m')\sin \la m,\cdot \ra \in \mathfrak{L}_f.
    \end{array}\right.
    \end{equation}
    \underline{If $a_m' \we b_m'=0$ and $a_m\we b_m=0$:} Thanks to an adapted choice of $\alpha,\beta,\alpha',\beta'$, we obtain that 
    \begin{align*}
        a_m\cos \la m,\cdot \ra\in \mathfrak{L}_f \quad \text{and} \quad 
        a_m'\cos \la m,\cdot \ra\in \mathfrak{L}_f,
    \end{align*}
    and necessarily $a_m\we a_m'\neq 0$, otherwise it would exist $\alpha,\beta \in \R$ such that $a_m'=\alpha a_m +\beta b_m$ and $b_m'=\alpha b_m-\beta a_m$. \\
    \underline{If $a_m' \we b_m'=0$ and if $a_m\we b_m \neq 0$:} Thanks to an adapted choice of $\alpha'$ and $\beta'$, $a_m'\cos \la m,\cdot \ra\in \mathfrak{L}_f$. Moreover $a_m\we b_m \neq 0$, so there exist $\alpha,\beta$ such that 
    \begin{equation*}
        \left\{\begin{array}{ll}
              \alpha a_m +\beta b_m=a_m' \\
              (\alpha b_m -\beta a_m)\we a_m' \neq 0.
        \end{array}\right.
    \end{equation*}
    But $a_m'\we b_m'=0$, so $(\alpha b_m-\beta a_m) \we b_m'\neq 0$. According to \eqref{T3 prop1 rotation formulas},
    \begin{equation}
        \left\{\begin{array}{ll}
             a_m'\cos \la m,\cdot \ra + (\alpha b_m -\beta a_m)\sin \la m,\cdot \ra \in \mathfrak{L}_f, \\
              a_m'\cos \la m,\cdot \ra +b_m'\sin \la m,\cdot \ra \in \mathfrak{L}_f,
        \end{array}\right.
    \end{equation}
    So $(\alpha b_m-\beta a_m -b_m')
    \sin \la m,\cdot \ra\in \mathfrak{L}_f$ and $(\alpha b_m -\beta a_m -b_m')\we a_m' \neq 0$. \\
    \underline{If $a_m'\we b_m'\neq0$ and $a_m \we b_m \neq 0$:} There exist $\alpha,\beta$ such that $\alpha a_m+\beta b_m =a_m'$ and then necessarily $\alpha b_m -\beta a_m \neq b_m'$. As explained in the previous case, 
    \begin{equation*}
        (\alpha b_m -\beta a_m -b_m')\sin \la m,\cdot \ra \in \mathfrak{L}_f.
    \end{equation*}
    If $(\alpha b_m-\beta a_m)\we b'_m=0$, then $b_m' \sin \la m,\cdot \ra \in \mathfrak{L}_f$, and by linear combination we also have $a_m'\cos \la m,\cdot \ra$, which is sufficient because $a_m'\we b_m' \neq 0$. Else, there exist $\gamma,\delta$ such that 
    \begin{equation*}
         \gamma b_m-\delta a_m=\alpha b_m-\beta a_m -b_m',
    \end{equation*}
    and 
    \begin{equation*}
        \left\{\begin{array}{ll}
             (\gamma a_m+\delta b_m)\cos \la m,\cdot \ra + (\gamma b_m-\delta a_m)\sin \la m,\cdot \ra \in \mathfrak{L}_f, \\
              (\alpha b_m-\beta a_m -b_m')\sin \la m,\cdot \ra \in \mathfrak{L}_f,
        \end{array}\right.
    \end{equation*}
    so $(\gamma a_m +\delta b_m)\cos \la m,\cdot \ra\in \mathfrak{L}_f$ and necessarily $(\gamma a_m+\delta b_m)\we (\gamma b_m -\delta a_m)\neq 0$, because $a_m \we b_m \neq 0$. 
    \end{proof}

\begin{prop} \label{generation of modes : from 2 non colinear modes}
    Lets $m,n\in \M_f$ be such that $m\we n \neq 0$. If there exist $a_m,a_m'\in m^{\perp}$ and $c_n,d_n \in n^{\perp}$ such that 
    \begin{align*}
        a_m\cos \la m,\cdot \ra \in \mathfrak{L}_f, \quad a_m'\cos \la m,\cdot \ra \in \mathfrak{L}_f, \quad
        c_n \cos \la n,\cdot \ra +d_n \sin \la n,\cdot \ra \in \mathfrak{L}_f,
    \end{align*}
    and such that $\mathrm{span}\left\{a_m,a_m',c_n\right\}=\R^3$, then $p_k\cos\la k,\cdot \ra + q_k\sin \la k,\cdot \ra \in \mathfrak{L}_f$ for every $p_k,q_k \in k^{\perp}$ and $k \in \la m,n\ra$, where $\la m,n\ra$ denotes the subgroup of $\Z^3$ generated by $m,n$.
\end{prop}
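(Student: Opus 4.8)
The plan is to prove completeness of every mode in the subgroup $\la m,n\ra$ by a three‑stage bootstrap: first saturate the mode $m$, then saturate $m+n$ by bracketing, and finally propagate across the whole lattice. For $k\neq 0$ set $W_k=\{p_k\cos\la k,\cdot\ra+q_k\sin\la k,\cdot\ra\mid p_k,q_k\in k^\perp\}$, a $4$-dimensional space of fields, and say $k$ is \emph{complete} if $W_k\subset\mathfrak{L}_f$; by \cref{T3 rem equivalences} this is equivalent to $p_k\cos\la k,\cdot\ra\in\mathfrak{L}_f$ for all $p_k\in k^\perp$. First I would observe that the hypothesis $\mathrm{span}\{a_m,a_m',c_n\}=\R^3$ forces $a_m,a_m'$ to be linearly independent; since they lie in the $2$-plane $m^\perp$ they span it, so all $p_m\cos\la m,\cdot\ra$ and hence, by the first equivalence of \cref{T3 rem equivalences}, all $p_m\sin\la m,\cdot\ra$ lie in $\mathfrak{L}_f$, i.e. $m$ is complete. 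The same hypothesis gives $c_n\notin m^\perp$, that is $\la m,c_n\ra\neq 0$; this nonvanishing is the quantity I exploit throughout.

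The core step is to show $m+n$ is complete. Applying $\ad_{\p_{x_i}}$ for a coordinate with $n_i\neq 0$ to $c_n\cos\la n,\cdot\ra+d_n\sin\la n,\cdot\ra$, via the derivative formulas preceding \cref{lemma canceled modes dim3}, produces the phase-shifted mode $E:=d_n\cos\la n,\cdot\ra-c_n\sin\la n,\cdot\ra\in\mathfrak{L}_f$. Introduce the linear maps $S,T:m^\perp\to(m+n)^\perp$,
\[
S(p_m)=\la m,c_n\ra p_m-\la n,p_m\ra c_n,\qquad T(p_m)=\la m,d_n\ra p_m-\la n,p_m\ra d_n .
\]
Bracketing $p_m\cos\la m,\cdot\ra$ (available since $m$ is complete) against $c_n\cos\la n,\cdot\ra+d_n\sin\la n,\cdot\ra$ and against $E$, and using \cref{formulas T3} together with the product-to-sum identities, I would extract the frequency-$(m+n)$ parts and find that the coefficient pairs $(\cos\la m+n,\cdot\ra,\ \sin\la m+n,\cdot\ra)$ equal to $(-T(p_m),S(p_m))$ and $(S(p_m),T(p_m))$ both lie in $\mathfrak{L}_f$, for every $p_m\in m^\perp$. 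Because $\la m,c_n\ra\neq 0$, the map $S$ is injective—a kernel vector would be a multiple of $c_n\notin m^\perp$—hence an isomorphism onto the plane $(m+n)^\perp$. Writing $A=T\circ S^{-1}$ and $s=S(p_m)$, the available pairs are exactly $(-As,s)$ and $(s,As)$ as $s$ ranges over $(m+n)^\perp$, and their span is all of $(m+n)^\perp\times(m+n)^\perp$ if and only if $A^2+\Id$ is invertible.

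Proving that invertibility is the step I expect to be the main obstacle, and it is where the geometry of $\R^3$ is decisive. The key remark is that $k:=m\we n$ lies in $m^\perp\cap n^\perp\cap(m+n)^\perp$, and since $\la n,k\ra=0$ one has $S(k)=\la m,c_n\ra k$ and $T(k)=\la m,d_n\ra k$; therefore $k$ is an eigenvector of $A$ with the \emph{real} eigenvalue $\la m,d_n\ra/\la m,c_n\ra$. A real $2\times 2$ matrix possessing one real eigenvalue has real spectrum, so $A$ has no eigenvalue $\pm i$ and $A^2+\Id$ is invertible. Hence $m+n$ is complete.

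It remains to propagate. I would isolate the step: if $p,q$ are complete and $p\we q\neq 0$, then $p+q$ is complete. Indeed \cref{formulas T3} yields $(\la p,w\ra v-\la q,v\ra w)\cos\la p+q,\cdot\ra$ and the matching sine in $\mathfrak{L}_f$ for all $v\in p^\perp,\ w\in q^\perp$; fixing $w\in q^\perp$ with $\la p,w\ra\neq 0$ (possible as $p\not\parallel q$), the map $v\mapsto\la p,w\ra v-\la q,v\ra w$ is an isomorphism $p^\perp\to(p+q)^\perp$, giving every cosine at $p+q$ and, by \cref{T3 rem equivalences}, completeness of $p+q$. Applying this to $(p,q)=(m+n,-m)$ (noting $W_{-m}=W_m$) shows $n$ is complete, and then a routine induction over the lattice finishes the proof: writing $(a,b)$ for $am+bn$, from the complete modes $m=(1,0)$ and $n=(0,1)$ one reaches every $(a,b)$ by repeatedly adding $\pm m$ or $\pm n$, always keeping the two summands non-collinear (build all $(1,b)$ and $(a,1)$ first, then fill in arbitrary $(a,b)$), with the constant fields covering $k=0$. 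Thus every $k\in\la m,n\ra$ is complete, which is the assertion.
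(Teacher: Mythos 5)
Your proof is correct, and its central step takes a genuinely different route from the paper's. The paper first uses the rotation freedom of \cref{theorem : appendix T3 span1} to normalize $d_n$ into $\mathrm{span}\{m\we n\}$, applies the third identity of \cref{formulas T3} twice (once with $a_m$, once with $a_m'$) to get two coefficient pairs at frequency $m+n$ with collinear cosine parts and independent sine parts, and then delegates the conclusion to the case analysis of \cref{adding modes : different angles}. You instead exploit from the outset that the mode $m$ is complete (all of $m^\perp$ in both phases, which does follow from $\mathrm{span}\{a_m,a_m'\}=m^\perp$ together with \cref{T3 rem equivalences}), bracket against both the given $n$-mode and its phase shift $E$, and reduce everything to the single linear-algebra fact that the planes $\{(-As,s)\}$ and $\{(s,As)\}$ fill $(m+n)^\perp\times(m+n)^\perp$ if and only if $A^2+\Id$ is invertible; the observation that $m\we n$ is a real eigenvector of $A=T\circ S^{-1}$ (so $A$ has real spectrum and $A^2+\Id$ is automatically invertible) closes the argument with no case distinctions and no normalization of $d_n$. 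Your explicit propagation lemma ($p,q$ complete and $p\we q\neq 0$ implies $p+q$ complete, via the isomorphism $v\mapsto\la p,w\ra v-\la q,v\ra w$) also makes the lattice induction more transparent than the paper's ``by iteration'', which implicitly re-applies the proposition and whose hypotheses must be re-checked at each step. What the paper's route buys in exchange is economy of machinery: \cref{adding modes : different angles} is needed elsewhere (in the proof of \cref{theorem : algebra dim3 span3}) anyway, so reusing it costs nothing there. In a written-up version you should make explicit that the extraction of the pure $(m+n)$-frequency parts is exactly the third identity of \cref{formulas T3} applied with $(p_m,0,c_n,d_n)$ and with $(0,p_m,c_n,d_n)$ --- both legitimate because $p_m\sin\la m,\cdot\ra$ and $E$ lie in $\mathfrak{L}_f$ --- and spell out the (routine) lattice induction.
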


\begin{proof}
     The vectors $\left\{a_m,a_m',c_n\right\}$ generate $\R^3$ and $\mathrm{span}\left\{a_m,a_m'\right\}=m^{\perp}$, so $c_n \we (m\we n)\neq 0$. According to \cref{theorem : appendix T3 span1}, for every $\alpha,\beta \in \R$,
     \begin{equation*}
         (\alpha c_n + \beta d_n)\cos \la n,\cdot \ra + (\alpha d_n - \beta c_n)\sin \la n,\cdot \ra \in \mathfrak{L}_f.
     \end{equation*} Or $c_n\we d_n=0$, and so we can chose $\alpha,\beta$ such that $\alpha d_n -\beta c_n=0$, or $c_n\we d_n \neq 0$, and so we can chose $\alpha,\beta$ such that
     \begin{equation*}
         (\alpha d_n -\beta c_n)\we (m \we n)=0.
     \end{equation*}
     So without loss of generality we can assume that
     \begin{equation}
         c_n \cos \la n,\cdot \ra + d_n \sin \la n,\cdot \ra \in \mathfrak{L}_f,
     \end{equation}
     with $c_n \we (m \we n)\neq 0$ and $d_n \in \mathrm{span} \left\{m \we n\right\}$. Then according to the third formula of \cref{formulas T3} applied with $(a_m,b_m=0,c_n,d_n)$, and because $\la m,d_n\ra=0$,
     \begin{equation*}
         p_{m+n}\cos\la m+n,\cdot\ra +q_{m+n}\sin \la m+n,\cdot \ra \in \mathfrak{L}_f,
     \end{equation*}
     with 
    \begin{equation*}
    \left\{\begin{array}{ll}
         p_{m+n}=-\la n,a_m \ra d_n \in \mathrm{span} \left\{m \we n\right\} \\
        q_{m+n}= -\la m,c_n \ra a_m + \la n,a_m \ra c_n.
    \end{array}\right.  
    \end{equation*}
    Thanks to the same formula applied with $(a_m',b_m'=0,c_n,d_n)$,
    \begin{equation*}
        p_{m+n}'\cos\la m+n,\cdot\ra +q_{m+n}'\sin \la m+n,\cdot \ra \in \mathfrak{L}_f,
    \end{equation*}
    with
     \begin{equation*}
    \left\{\begin{array}{ll}
         p_{m+n}'=-\la n,a_m' \ra d_n \in \mathrm{span} \left\{m \we n\right\} \\
        q_{m+n}'= -\la m,c_n \ra a_m' + \la n,a_m' \ra c_n.
    \end{array}\right.  
    \end{equation*}
    Because $\la m,c_n\ra\neq 0$ and $\mathrm{span} \left\{a_m,a_m',c_n\right\}=\R^3$, then necessarily $q_{m+n}\we q_{m+n}'\neq 0$. Because $p_{m+n}\we p_{m+n}'=0$, then necessarily for every $\alpha,\beta \in \R$,
    \begin{equation*}
        (p_{m+n}',q_{m+n}')\neq (\alpha p_{m+n}+\beta q_{m+n},\alpha q_{m+n}-\beta p_{m+n}).
    \end{equation*}
    According to \cref{adding modes : different angles}, 
    \begin{equation}
        a_{m+n}\cos \la m+n,\cdot \ra+b_{m+n}\sin \la m+n,\cdot \ra \in \mathfrak{L}_f,
    \end{equation}
    for every $a_{m+n},b_{m+n}\in (m+n)^{\perp}$. So by iteration of this computation we can obtain that $a_k\cos\la k,\cdot \ra \in \mathfrak{L}_f$ for every $a_k\in k^{\perp}$ and $k\in m\N+n\N$. Moreover, it is also verified that
    \begin{equation*}
        a_m \cos \la -m,\cdot \ra \in \mathfrak{L}_f, \quad a_m'\cos \la -m, \cdot \ra \in \mathfrak{L}_f, \quad c_n\cos \la -n,\cdot \ra - d_n\sin \la -n,\cdot \ra \in \mathfrak{L}_f,
    \end{equation*}
    so in the same way we obtain that $a_k \cos \la k,\cdot \ra \in \mathfrak{L}_f$ for every $a_k \in k^{\perp}$ and $k \in m\Z+n\Z$.
    \end{proof}

\begin{prop} \label{generation of every modes dimV0=3 dim I0=3}
    If $\mathrm{span}\left\{a_m,b_m \mid m \in \M_f \right\}=\R^3$ and $\mathrm{span}\M_f = \R^3$, and if there exists a non-zero mode $\ell \in \Gamma$, $p_{\ell},p_\ell'\in \ell^{\perp}$ such that $p_\ell\we p_\ell'\neq 0$ and such that 
    \begin{equation*}
        \left\{\begin{array}{ll}
             p_\ell \cos \la \ell,\cdot \ra \in \mathfrak{L}_f,\\
             p_\ell'\cos \la \ell,\cdot \ra \in \mathfrak{L}_f,
        \end{array}\right.
    \end{equation*}
    then $p_k\cos \la k,\cdot \ra +q_k \sin \la k,\cdot \ra \in \mathfrak{L}_f$, for every $p_k,q_k\in k^{\perp}$ and $k \in \Gamma$.
\end{prop}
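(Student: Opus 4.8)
The plan is to track the set $G$ of nonzero modes $k\in\Gamma$ that are \emph{saturated}, meaning $p_k\cos\la k,\cdot\ra+q_k\sin\la k,\cdot\ra\in\mathfrak{L}_f$ for all $p_k,q_k\in k^{\perp}$; by \cref{T3 rem equivalences} this is equivalent to $p_k\cos\la k,\cdot\ra\in\mathfrak{L}_f$ for every $p_k\in k^{\perp}$. The hypothesis supplies two independent cosine vectors at $\ell$, hence by linearity all of $\ell^{\perp}$ on the cosine and, by \cref{T3 rem equivalences}, $\ell\in G$; the goal is to show $G=\Gamma\setminus\{0\}$ (the fields $\p_x,\p_y,\p_z$ already lie in $\mathfrak{L}_f$). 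The engine is \cref{generation of modes : from 2 non colinear modes}, used in the mildly more general form where $m$ need not belong to $\M_f$: its proof uses only that $a_m\cos\la m,\cdot\ra,\,a_m'\cos\la m,\cdot\ra\in\mathfrak{L}_f$, that $c_n\cos\la n,\cdot\ra+d_n\sin\la n,\cdot\ra\in\mathfrak{L}_f$, that $m\we n\neq0$, and that $\mathrm{span}\{a_m,a_m',c_n\}=\R^3$. This yields the key \emph{closure property}: if $m,n\in G$ and $m\we n\neq0$, then $\la m,n\ra\setminus\{0\}\subset G$. Indeed $m$ is saturated, and since $n\in G$ one may pick $c_n\in n^{\perp}\setminus m^{\perp}$ (nonempty because $m\we n\neq0$ forces $n^{\perp}\not\subset m^{\perp}$), so the spanning hypothesis holds.

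First I would seed $G$ with three modes spanning $\R^3$. Since $\mathrm{span}\left\{a_m,b_m\mid m\in\M_f\right\}=\R^3$, the spaces $W_n:=\mathrm{span}\{a_n,b_n\}$ cannot all lie in $\ell^{\perp}$; as $W_n\subset n^{\perp}=\ell^{\perp}$ whenever $n$ is collinear with $\ell$, there is $n_1\in\M_f$ with $n_1\we\ell\neq0$ and $W_{n_1}\not\subset\ell^{\perp}$. Rotating the mode $a_{n_1}\cos\la n_1,\cdot\ra+b_{n_1}\sin\la n_1,\cdot\ra\in\mathfrak{L}_f$ (available by \cref{lemma canceled modes dim3}) as in \cref{theorem : appendix T3 span1} puts a vector $c_{n_1}\in W_{n_1}\setminus\ell^{\perp}$ on the cosine, and \cref{generation of modes : from 2 non colinear modes} with $m=\ell$ gives $\la\ell,n_1\ra\setminus\{0\}\subset G$. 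Next, since $\mathrm{span}\,\M_f=\R^3$ there is $n_2\in\M_f$ with $n_2\notin\mathrm{span}\{\ell,n_1\}$. A short computation shows $W_{n_2}$ cannot lie in $\mathrm{span}(\ell\we n_1)$, for otherwise $a_{n_2},b_{n_2}$ would be parallel to $\ell\we n_1$, forcing $n_2\in\mathrm{span}\{\ell,n_1\}$; hence some $p\in\la\ell,n_1\ra\subset G$ satisfies $\la p,c_{n_2}\ra\neq0$ for a suitable cosine vector $c_{n_2}$, while $p\we n_2\neq0$ automatically. A second application brings $n_2$ into $G$, so $\ell,n_1,n_2\in G$ span $\R^3$ with $\ell\we n_1\neq0$.

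The heart of the argument is to upgrade the closure property to the assertion that $G\cup\{0\}$ is a subgroup of $\Z^3$, and I expect this to be the main obstacle. Negation is immediate (saturation at $m$ gives saturation at $-m$), and if $m,n\in G$ are non-collinear then $m+n\in\la m,n\ra\subset G\cup\{0\}$. The delicate case is $m,n$ collinear with $m+n\neq0$. Here I would fix $w\in G$ with $w\we m\neq0$ (it exists because $G$ spans $\R^3$) and route through it: $m-w\in G$ from $\la m,w\ra$; then $m+n-w=(m-w)+n\in G$ since $m-w$ and $n$ are non-collinear (as $n\in\mathrm{span}(m)$ while $m-w\notin\mathrm{span}(m)$); and finally $m+n=(m+n-w)+w\in G$ since $m+n-w$ and $w$ are non-collinear. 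Each step is a legitimate use of the closure property and each intermediate vector is nonzero, so $G\cup\{0\}$ is closed under addition.

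It remains to check $G\cup\{0\}\supset\M_f$. For $m\in\M_f\setminus G$, using that $G\cup\{0\}$ is now a full-rank subgroup of $\Z^3$ I would choose $p\in G$ with $p\we m\neq0$ and $\la p,c_m\ra\neq0$ for some cosine vector $c_m$ of the $m$-mode furnished by \cref{lemma canceled modes dim3}; \cref{generation of modes : from 2 non colinear modes} then places $m$ in $G$. Thus $\M_f\subset G\cup\{0\}$, so the subgroup $G\cup\{0\}$ contains $\Gamma=\la\M_f\ra$; since every mode occurring is in $\Gamma$, this gives $G\cup\{0\}=\Gamma$, which is exactly the claim that $p_k\cos\la k,\cdot\ra+q_k\sin\la k,\cdot\ra\in\mathfrak{L}_f$ for all $k\in\Gamma$ and $p_k,q_k\in k^{\perp}$.
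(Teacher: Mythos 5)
Your proposal is correct and follows essentially the same route as the paper: seed three independent saturated directions by combining the saturated mode $\ell$ with modes of $\M_f$ via \cref{generation of modes : from 2 non colinear modes} (used, as you rightly note, in the slightly generalized form where the saturated mode need not lie in $\M_f$), then absorb all of $\M_f$ and close under sums, treating collinear pairs by a detour through a third non-collinear saturated mode. Your packaging of the paper's ``by iteration / by recurrence'' steps as the statement that the set of saturated modes together with $0$ forms a full-rank subgroup of $\Z^3$ is a cleaner formalization of the same argument, not a different one.
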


\begin{proof}
    Because $\mathrm{span}\left\{a_m,b_m \mid m \in \M_f\right\}=\R^3$ and $\mathrm{span}\M_f=\R^3$, there exist a non-zero mode $m \in \M_f$ and $a_m,b_m\in m^{\perp}$ such that 
    \begin{equation*}
        a_m \cos \la m,\cdot \ra +b_m \sin \la m,\cdot \ra \in \mathfrak{L}_f,
    \end{equation*}
    and $\mathrm{span} \left\{a_m,p_\ell,p_\ell'\right\}=\R^3$. So according to \cref{generation of modes : from 2 non colinear modes}, $p_k \cos \la k,\cdot \ra \in \mathfrak{L}_f$ for every $p_k\in k^{\perp}$ and $k \in m\Z+\ell \Z$. Because $a_m \notin \ell^{\perp}$ and $\la m,a_m \ra=0$, then $m\we \ell \neq 0$. Because $\mathrm{span} \M_f=\R^3$, there exist a non-zero mode $\M_f$, and $a_n,b_n\in n^{\perp},a_n\neq0$ such that 
    \begin{equation*}
        a_n\cos \la n,\cdot \ra +b_n \sin \la n,\cdot \ra \in \mathfrak{L}_f,
    \end{equation*}
    and $\mathrm{span} \left\{m,\ell,n\right\}=\R^3$. If $\la m,a_n\ra = \la \ell, a_n \ra =0$, then $a_n =0$. If we assume without loss of generality that $a_n \notin m^{\perp}$, according to \cref{generation of modes : from 2 non colinear modes}, $p_k\cos \la k,\cdot \ra \in \mathfrak{L}_f$ for every $p_k\in k^{\perp}$ and $k \in m\Z+n\Z$. By iteration, we obtain that $p_k \cos \la k,\cdot \ra \in \mathfrak{L}_f$ for every $p_k\in k^{\perp}$ and $k \in m\Z+n \Z+\ell \Z$. Let $k' \in \Gamma$ be an other mode and $a_{k'},b_{k'}\in {k'}^{\perp},a_{k'}\neq 0$ such that 
    \begin{equation*}
        a_{k'}\cos \la k',\cdot \ra + b_k'\sin \la k',\cdot \ra \in  \mathfrak{L}_f.
    \end{equation*}
    Necessarily there exists one mode in $\left\{m,n,\ell\right\}$, for example $m$, such that $a_{k'}\notin m^{\perp}$. Then $k'\we m \neq 0$ and $p_{k}\cos \la k,\cdot \ra \in \mathfrak{L}_f$ for every $p_k \in k^{\perp}$ and $k \in k'\Z + m\Z$. So in particular for every $k \in \M_f$, $p_k \cos \la k,\cdot \ra \in \mathfrak{L}_f$ for every $p_k \in k^{\perp}$. Let $k_1,k_2 \in \M_f$ be two non-zero modes. If $k_1\we k_2\neq0$, according to the second formula of \cref{formulas T3},
    \begin{equation}
        (\la k_1,a_{k_2}\ra a_{k_1}-\la k_2,a_{k_1}\ra a_{k_2})\cos \la k_1+k_2, \cdot \ra \in \mathfrak{L}_f,
    \end{equation}
    for every $a_{k_1}\in k_1^{\perp}, a_{k_2}\in k_2^{\perp}$. So it is clear that $p_{k_1+k_2}\cos \la k_1+k_2,\cdot \ra \in \mathfrak{L}_f$ for every $p_{k_1+k_2}\in (k_1+k_2)^{\perp}$. If $k_1\we k_2=0$, then there exists $m \in \M_f$ such that $m\we k_1\neq 0$ and such that for every $a\in k_1^{\perp}$ and $b \in m^{\perp}$, 
    \begin{equation*}
        \left\{ \begin{array}{ll}
             (\la m,a\ra b-\la k_1,b\ra a)\cos \la k_1+m,\cdot\ra \in \mathfrak{L}_f,   \\
             (\la -m,a\ra b-\la k_2,b\ra a) \cos \la k_2-m, \cdot \ra \in \mathfrak{L}_f,  
        \end{array}\right.
    \end{equation*}
    and so we also prove that $p_k \cos \la k,\cdot \ra \in \mathfrak{L}_f$ for every $p_k\in k^{\perp}$ and $k \in \la k_1,k_2 \ra$. This procedure can be generalized by recurrence and we obtain that $p_k \cos \la k,\cdot \ra \in \mathfrak{L}_f$ for every $p_k\in k^{\perp}$ and $k \in  \Gamma$. 
    \end{proof}

\begin{proof}[Proof of \cref{theorem : algebra dim3 span3}.]
    If $\mathrm{span}\left\{a_m,b_m \mid m \in \M_f\right\}=\R^3$ and $\mathrm{span}\M_f=\R^3$, according to \cref{generation of every modes dimV0=3 dim I0=3}, it is sufficient to prove that there exists a non-zero mode $\ell \in \Gamma$ such that $p_\ell \cos \la \ell,\cdot \ra \in \mathfrak{L}_f $ for every $p_{\ell}\in \ell^{\perp}$. There are two possible situations: 
    \begin{enumerate}
        \item[1.] Either there exist $m,n\in \M_f$ such that $m\we n \neq 0$, 
        \begin{equation*}
            a_j\cos \la j,\cdot \ra +b_j \sin \la j,\cdot \ra \in \mathfrak{L}_f, \quad j\in \left\{m,n\right\},
        \end{equation*}
        and such that $\mathrm{span} \left\{a_m,b_m,a_n\right\}=\R^3$. 
        \item[2.] Or there exist $m,n,k\in \M_f$ such that $\mathrm{span} \left\{m,n,k\right\}=\R^3$, 
        \begin{equation}
            a_j \cos \la j,\cdot \ra +b_j \sin \la j,\cdot \ra \in \mathfrak{L}_f, \quad a_j \we b_j =0, \quad j \in \left\{m,n,k\right\},
        \end{equation}
        and such that $\mathrm{span} \left\{a_m,a_n,a_k\right\}=\R^3$.
    \end{enumerate}
    Let us consider both cases. \\
    \textbf{1.} Because $\mathrm{span} \left\{a_m,b_m,a_n\right\}=\R^3$, necessarily $a_m\we b_m \neq 0$. First we assume that $a_n \we b_n \neq 0$. According to \cref{theorem : appendix T3 span1}, for every $\alpha,\beta\in \R$,
    \begin{equation*}
        (\alpha a_j+\beta b_j)\cos \la j,\cdot \ra +(\alpha b_j-\beta a_j)\sin \la j ,\cdot \ra \in \mathfrak{L}_f \quad j \in \left\{m,n\right\}.
    \end{equation*}
    Thanks to an adapted choice of $\alpha,\beta$, we can assume that $b_m,b_n \in \mathrm{span} \left\{m\we n\right\}$. According to the third formula of \cref{formulas T3} applied with $(a_m,b_m,a_n,b_n)$, 
    \begin{equation*}
        p_{m+n}\cos \la m+n,\cdot \ra+q_{m+n}\sin \la m+n,\cdot \ra \in \mathfrak{L}_f, 
    \end{equation*}
    with 
    \begin{equation*}
        \left\{\begin{array}{ll}
             p_{m+n}=\la m,a_n\ra b_m  - \la n,a_m\ra b_n  \in \mathrm{span} \left\{m\we n\right\} \\
             q_{m+n}=- \la m,a_n \ra a_m + \la n,a_m \ra a_n. 
        \end{array}\right.
    \end{equation*}
    But it is also true that $a_n\cos \la -n,\cdot \ra-b_n \sin \la -n,\cdot\ra \in \mathfrak{L}_f$, so again we can apply the third formula of \cref{formulas T3} with $(p_{m+n},q_{m+n},a_{n},-b_n)$, and we obtain that $a_m'\cos \la m,\cdot \ra +b_m' \sin \la m,\cdot \ra \in \mathfrak{L}_f$, with 
    \begin{equation*}
    \left\{\begin{array}{ll}
         a_m'=-\la m,a_n \ra ^2 a_m  \\
         b_m'=-\la m,a_n \ra ^2 b_m + \la n,a_m \ra \la m,a_n \ra b_n.  
    \end{array}\right.
    \end{equation*}
    By assumption $a_n \we b_n \neq 0$, so $b_n \neq 0$. Moreover, $a_m,a_n\notin \mathrm{span} \left\{m \we n\right\}$, so $\la n,a_m \ra \la m,a_n \ra \neq 0$. Then $(a_m',b_m')\neq (\alpha a_m + \beta b_m, \alpha b_m -\beta a_m)$ for every $\alpha,\beta \in \R$ and $a_m,a_m' \neq 0$, so according to \cref{adding modes : different angles}, $p_m \cos \la m,\cdot \ra \in \mathfrak{L}_f$ for every $p_m \in m^{\perp}$. If we assume that $a_n \we b_n =0$, then $a_n \cos \la n,\cdot \ra \in \mathfrak{L}_f$. We can still assume that $b_m \we (m\we n)=0$, and because $\mathrm{span} \left\{a_m,b_m,a_n\right\}=\R^3$, necessarily $a_n \we b_m \neq 0$. Finally we obtain that
     \begin{equation*}
        p_{m+n}\cos \la m+n,\cdot \ra+q_{m+n}\sin \la m+n,\cdot \ra \in \mathfrak{L}_f, 
    \end{equation*}
    with 
    \begin{equation*}
        \left\{\begin{array}{ll}
             p_{m+n}=\la m,a_n\ra b_m   \\
             q_{m+n}=- \la m,a_n \ra a_m + \la n,a_m \ra a_n. 
        \end{array}\right.
    \end{equation*}
    But $m \we (m+n)\neq 0$, $a_m\we b_m \neq 0$, $p_{m+n}\we q_{m+n}\neq 0$ and $\mathrm{span} \left\{p_{m+n},q_{m+n},a_m\right\}=\R^3$, so according to the previous computations, $a_{m+n}\cos \la m+n,\cdot \ra \in \mathfrak{L}_f$ for every $a_{m+n}\in (m+n)^{\perp}$. 
    \\ \\
    \textbf{2.} If $a_j \cos \la j,\cdot \ra +b_j \sin \la j,\cdot \ra \in \mathfrak{L}_f$ and $a_j \we b_j=0$, then according to \cref{adding modes : different angles} $a_j \cos \la j,\cdot \ra \in \mathfrak{L}_f$. So in this case there exist $m,n,k\in \M_f$ such that $\mathrm{span} \left\{m,n,k\right\}=\R^3$ and such that
    \begin{equation*}
        a_m \cos \la m,\cdot \ra \in \mathfrak{L}_f, \quad a_n \cos \la n,\cdot \ra \in \mathfrak{L}_f, \quad a_k \cos \la k,\cdot \ra \in \mathfrak{L}_f,   
    \end{equation*}
    with $\mathrm{span} \left\{a_m,a_n,a_k\right\}=\R^3$. According to the second formulas of \cref{formulas T3}, $p_{m+n}\cos \la m+n,\cdot \ra\in \mathfrak{L}_f$ and $p_{n+k}\cos \la n+k,\cdot \ra \in \mathfrak{L}_f$ with 
    \begin{equation*}
        p_{m+n}= \la m,a_n \ra a_m - \la n,a_m \ra a_n, \quad p_{n+k}=\la n,a_k \ra a_n - \la k,a_n \ra a_k.
    \end{equation*}
    If $\la m,a_n\ra =0$ then $a_n \in \mathrm{span} \left\{ m\we n \right\}$ and necessarily $\la n,a_m\ra \neq 0$. So $p_{m+n}\neq 0$ and by symmetry $p_{n+k}\neq 0$.
    Then according to the same formulas, 
    \begin{equation*}
        p_{m,n,k}\cos \la m+n+k,\cdot \ra \in \mathfrak{L}_f, \quad p_{n,k,m}\cos \la n,k,m, \cdot \ra \in \mathfrak{L}_f,
    \end{equation*}
    with
    \begin{equation*}
        \left\{\begin{array}{ll}
             p_{m,n,k}=\la k,p_{m+n}\ra a_k -\la m+n,a_k \ra p_{m+n},  \\
             p_{n,k,m}=\la m,p_{n+k} \ra a_m - \la n+k,a_m\ra p_{n+k}.
        \end{array}\right.
    \end{equation*}
    If $\la m+n ,a_k \ra=0$ and $\la k,p_{m+n}\ra=0$, then $a_k,p_{m+n}\in \mathrm{span} \left\{(m+n)\we k\right\}$, and so $a_k \we p_{m+n}=0$. But $p_{m+n}\in \mathrm{span} \left\{a_m,a_n\right\}$ and $\mathrm{span} \left\{a_m,a_n,a_k\right\}=\R^3$, so necessarily $\la m+n,a_k \ra \neq 0$ or $\la k,p_{m+n}\ra \neq 0$ and $p_{m,n,k}\neq 0$. By symmetry $p_{n,k,m}\neq 0$. By computation we obtain that 
    \begin{equation*}
        \left\{\begin{array}{ll}
             p_{m,n,k}=-\la m+n,a_k\ra\la m,a_n \ra a_m + \la m+n,a_k\ra \la n,a_m \ra a_n + (\la m,a_n\ra \la k,a_m\ra -\la n,a_m \ra \la k,a_n\ra)a_k ,  \\ \\
             p_{n,k,m}=(\la m,a_n \ra \la n,a_k \ra-\la m,a_k \ra \la k,a_n \ra)a_m -\la n+k,a_m \ra \la n,a_k \ra a_n +\la n+k,a_m \ra \la k,a_n \ra a_k. 
        \end{array}\right.
    \end{equation*}
    If $p_{m,n,k}\we p_{n,k,m}=0$, then there exist a non-zero $\lambda\in \R$ such that 
    \begin{equation*}
        \left\{\begin{array}{ll}
             -\la m+n,a_k \ra \la m,a_n \ra = \lambda (\la m,a_n \ra \la n,a_k \ra - \la m,a_k \ra \la k,a_n \ra )  \\
              \la m+n,a_k \ra \la n,a_m \ra = -\lambda \la n+k,a_m \ra \la n,a_k \ra \\
              \la m,a_n \ra \la k,a_m \ra - \la n,a_m \ra \la k,a_n \ra = \lambda \la n+k,a_m \ra \la k,a_n \ra.
        \end{array}\right.
    \end{equation*}
    And so 
    \begin{equation*}
        \lambda \la n+k,a_m \ra\la n,a_k \ra \la m,a_n \ra = \lambda \la n,a_m \ra  (\la m,a_n \ra \la n,a_k \ra - \la m,a_k \ra \la k,a_n \ra ),
    \end{equation*}
    which leads to 
    \begin{equation} \label{eq : T3 condition colinearity}
        \la k,a_m \ra \la n,a_k \ra \la m,a_n \ra = - \la n,a_m \ra \la m,a_k \ra \la k,a_n \ra.
    \end{equation}
    Lets us prove that this cannot be verified in this case. By computation we obtain that 
    \begin{align*}
        \det\begin{pmatrix}
            a_m^1 &a_m^2 & a_m^3 \\
            a_n^1 & a_n^2 & a_n^3\\
            a_k^1 & a_k^2 & a_k^3
        \end{pmatrix}\begin{pmatrix}
            m^1 & n^1 &k^1 \\
            m^2 & n^2 &k^2 \\
            m^3 & n^3 &k^3 
        \end{pmatrix}&=\det\begin{pmatrix}
            0& \la n,a_m\ra & \la k,a_m\ra \\
            \la m,a_n \ra & 0 & \la k,a_n \ra \\
            \la m,a_k \ra & \la n,a_k \ra & 0
        \end{pmatrix} \\
        &=\la n,a_m\ra \la k,a_n \ra \la m,a_k \ra + \la k,a_m \ra \la m,a_n \ra \la n,a_k \ra.
    \end{align*}
    Equation \eqref{eq : T3 condition colinearity} is verified if and only if this determinant is equal to zero, which is impossible because $\mathrm{span}\left\{a_m,a_n,a_k\right\}=\mathrm{span}\left\{m,n,k\right\}=\R^3$. Then necessarily $p_{m,n,k}\we p_{n,k,m}\neq 0$. 
    
    In both cases we conclude thanks to \cref{generation of modes : from 2 non colinear modes}.
    \end{proof}

\noindent \textbf{Acknowledgments.} The authors wish to thank Mario Sigalotti and Eugenio Pozzoli for enlightening discussions and very useful remarks. This project was partly supported by the SMAI through the BOUM grant for young researchers.

\end{document}